\newtheorem{theorem}{Theorem}[section]
\newtheorem{lemma}[theorem]{Lemma}
\newtheorem{proposition}[theorem]{Proposition}
\newtheorem{remark}[theorem]{Remark}
\newtheorem{definition}[theorem]{Definition}
\newcommand{\nc}{\newcommand}
\nc{\cH}{{\mathcal H}}
\nc{\cA}{{\mathcal A}}
\nc{\cG}{{\mathcal G}}
\nc{\cC}{{\mathcal C}}
\nc{\cO}{{\mathcal O}}
\nc{\cI}{{\mathcal I}}
\nc{\cB}{{\mathcal B}}
\nc{\cY}{{\mathcal Y}}
\nc{\cK}{{\mathcal K}}
\nc{\cX}{{\mathcal X}}
\nc{\cS}{{\mathcal S}}
\nc{\cE}{{\mathcal E}}
\nc{\cF}{{\mathcal F}}
\nc{\cZ}{{\mathcal Z}}
\nc{\cQ}{{\mathcal Q}}
\nc{\cN}{{\mathcal N}}
\nc{\cP}{{\mathcal P}}
\nc{\cL}{{\mathcal L}}
\nc{\cM}{{\mathcal M}}
\nc{\cT}{{\mathcal T}}
\nc{\cW}{{\mathcal W}}
\nc{\cU}{{\mathcal U}}
\nc{\cJ}{{\mathcal J}}
\nc{\cV}{{\mathcal V}}
\nc{\bH}{{\mathbb H}}
\nc{\bA}{{\mathbb A}}
\nc{\bG}{{\mathbb G}}
\nc{\bC}{{\mathbb C}}
\nc{\bO}{{\mathbb O}}
\nc{\bI}{{\mathbb I}}
\nc{\bB}{{\mathbb B}}
\nc{\bY}{{\mathbb Y}}
\nc{\bK}{{\mathbb K}}
\nc{\bX}{{\mathbb X}}
\nc{\bS}{{\mathbb S}}
\nc{\bE}{{\mathbb E}}
\nc{\bF}{{\mathbb F}}
\nc{\bZ}{{\mathbb Z}}
\nc{\bQ}{{\mathbb Q}}
\nc{\bN}{{\mathbb N}}
\nc{\bP}{{\mathbb P}}
\nc{\bL}{{\mathbb L}}
\nc{\bM}{{\mathbb M}}
\nc{\bT}{{\mathbb T}}
\nc{\bW}{{\mathbb W}}
\nc{\bU}{{\mathbb U}}
\nc{\bD}{{\mathbb D}}
\nc{\bJ}{{\mathbb J}}
\nc{\bV}{{\mathbb V}}
\nc{\bbZ}{{\mathbb Z}}
\nc{\bR}{{\mathbb R}}
\nc{\fr}{{\rightarrow}}
\nc{\dbar}{{\bar{\partial}}}
\nc{\co}{{\nabla}}
\nc{\cu}{{\overlineline{\nabla}}}
\begin{document}

\title{Some results on deformations of sections of vector bundles.}
\author[A. Castorena]{Abel Castorena}
\address{Centro de Ciencias Matem\'aticas (Universidad Nacional Auton\'oma de M\'exico, Campus Morelia); Apartado Postal 61-3 (Xangari), 58089 Morelia, Michoac\'an}
\email{abel@matmor.unam.mx}
\author[G.P. Pirola]{Gian Pietro Pirola}
\address{Dipartimento di Matematica, Universit\`a degli Studi di Pavia, Via Ferrata 1, 27100 Pavia, Italy}
\email{gianpietro.pirola@unipv.it}
\thanks{The first named author was partially supported by sabbatical Grants 232811(CONACyT, M\'exico) and PASPA-DGAPA (UNAM, M\'exico), and was partially supported as Visiting Professor by INdAM(Istituto Nazionale di Alta Matematica ``F. Severi").\\ 
The second named author is partially supported by INdAM (GNSAGA); PRIN 2012 \emph{``Moduli, strutture geometriche e loro applicazioni''} and FAR 2014 (PV) \emph{``Variet\`a algebriche, calcolo algebrico, grafi orientati e topologici''.}}
\subjclass[2000]{Primary 14B12\and Secondary 14C20.}
\keywords{Continuous rank, paracanonical system, semi-regular divisor, vanishing theorems.}

\maketitle
\begin{abstract} Let $E$ be a vector bundle on a smooth complex projective variety $X$. We study the family of sections $s_t\in H^0(E\otimes L_t)$ where $L_t\in Pic^0(X)$ is a family of topologically trivial line bundle and $L_0=\cO_X,$ that is, we study deformations of $s=s_0$.
By applying the approximation theorem of Artin \cite{A} we give a transversality condition that generalizes the semi-regularity of an effective Cartier divisor. Moreover, we obtain another proof of the Severi-Kodaira-Spencer theorem \cite{Bh}. We apply our results to give a lower bound to the continuous rank of a vector bundle as defined by Miguel Barja \cite{B} and a proof of a piece of the generic vanishing theorems \cite{GL1} and \cite{GL2} for the canonical bundle. We extend also to higher dimension a result given in \cite{MPP1}  on the base locus of the paracanonical base locus for surfaces.  
\end{abstract}

\section{Introduction}

Let $X$ be a smooth complex projective variety of dimension $n$. Let $P=Pic^0(X)$ be the Picard variety of 
$X$. We recall that the tangent space to $P$ at any point is isomorphic to $H^1(X,\cO_X)$. The irregularity of $X$ is $q(X):=\dim H^1(X,\cO_X)=\dim P$, and $X$ is said to be irregular if $q(X)>0$. Given a vector bundle $E$ on $X$ and $L\in P$, the action $(L,E)\to E\otimes L=E(L)$ defines a family of vector bundles parametrized by $P$. This paper is devoted to study the Brill-Noether loci:
$$\cW^k(E):=\{L \in P: h^0(E(L))\geq k+1\}$$ where $h^i(E)=\dim H^i(X,E)$. Set $\cW^0(E)=\cW(E)$.
As in \cite{MPP2} we would like to give some conditions that implies 
that $\dim \cW(E)>0$. This is a deformation problem for the  sections of  $E$. Given $s\in H^0(X,E)$ and $\zeta\in H^1(X,\cO_X)$, denote by $\zeta\cdot s$ its cup product, then the first order condition to deform $s$ in the direction $\zeta$ is $\zeta\cdot s=0\in H^1(X,E)$, and in general there are higher obstructions. Our first result  (See section $2$)  is that they vanishes under a transversality hypothesis that generalizes, in our context, the semi-regularity condition and the Koszul condition 
\cite{MPP1} borrowed from the work of M. Green and R. Lazarsfeld {\cite{GL1}. For any $\zeta\in H^1(X,\cO_X)$, define the complex
$$H^0(X,E)\stackrel{\cdot\zeta}\to H^1(X,E)\stackrel{\cdot\zeta}\to H^2(X,E),$$
\noindent and set $H^1(X,E)_\zeta= \ker(\zeta : H^1(X,E) \to H^2(X,E))$. From now on, we will denote by $\Delta\subset\Bbb C$ a complex disk with $0\in\Delta$. Our first result is :

\begin{proposition}{\bf(Transversality.)} \label{trans}
Let $s\in H^0(X,E)$ and $\zeta\in H^1(X,\cO_X)$ be elements such that $\zeta\cdot s=0$ in $H^1(X,E)$.
If $H^1(X,E)_\zeta=s\cdot H^1(X,\cO_X)+\zeta\cdot H^0(X,E)$ then there exists an analytic curve $ \gamma:\Delta \to P$ such that $\gamma(0)=\cO_X$ and $\gamma'(0) =\zeta$, and there exists a family of sections $s(t) \in H^0(X,E(\gamma(t)))$ such that $s(0)=s$. In other words, the direction $\zeta$ is not obstructed for $s$. Moreover we may assume that the image of  $\gamma$ contains an open set of an algebraic curve. In particular $\dim \cW(E)>0$.
\end{proposition}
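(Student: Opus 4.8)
The plan is to build a \emph{formal} solution of the deformation problem and then to invoke Artin's approximation theorem \cite{A} to replace it by a convergent, indeed algebraic and \'etale--local, one. Fix a finite Stein (resp. affine) cover $\mathcal U=\{U_\alpha\}$ of $X$, fine enough to trivialize $E$, so that the \v Cech complex $C^\bullet(\mathcal U,-)$ computes cohomology. A formal arc $\hat\gamma\colon\operatorname{Spec}\mathbb{C}[[t]]\to P$ with $\hat\gamma(0)=\cO_X$ is encoded by a $1$--cocycle $c(t)=\sum_{j\ge 1}t^j c^{(j)}\in C^1(\mathcal U,\cO_X)[[t]]$ with $[c^{(1)}]=\hat\gamma'(0)$, the classes $[c^{(j)}]\in H^1(X,\cO_X)$ being free parameters for $j\ge 2$; we set $[c^{(1)}]=\zeta$. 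Trivializing $E(L_t)=E\otimes L_t$ on each $U_\alpha$ with transition functions $g^E_{\alpha\beta}\exp(c_{\alpha\beta}(t))$ (where the $g^E_{\alpha\beta}$ are those of $E$), its \v Cech differential becomes a power series $D=D_0+tD_1+t^2D_2+\cdots$ of operators on $C^\bullet(\mathcal U,E)[[t]]$, where $D_0$ is the \v Cech differential of $E$ and $D_1$ is, up to sign, cup product with the cocycle $c^{(1)}$, hence induces $\cdot\,\zeta$ on cohomology. From $D^2=0$ one reads off $D_0^2=0$, $D_0D_1+D_1D_0=0$, $D_1^2=-(D_0D_2+D_2D_0)$, and so on. A formal family of sections extending $s$ is then a $0$--cochain $\sigma(t)=s+\sum_{k\ge 1}t^k\sigma_k$ with $D\sigma(t)=0$.

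I would construct $\sigma(t)$ and the $c^{(j)}$ $(j\ge 2)$ by induction on the order. The order--zero term is $s$, and $D_0s=0$ since $s$ is global. Suppose $\sigma=s+t\sigma_1+\cdots+t^k\sigma_k$ satisfies $D\sigma\equiv 0\ (\mathrm{mod}\ t^{k+1})$, and write $D\sigma=t^{k+1}\Theta+O(t^{k+2})$ with $\Theta\in C^1(\mathcal U,E)$. Reading off the coefficients of $t^{k+1}$ and $t^{k+2}$ in the identity $D(D\sigma)=0$ gives $D_0\Theta=0$ and $D_1\Theta\in\operatorname{im}D_0$; thus $\Theta$ is a cocycle, its class $\operatorname{ob}_{k+1}=[\Theta]\in H^1(X,E)$ is the obstruction to extending one step further, and $\zeta\cdot\operatorname{ob}_{k+1}=0$, i.e. $\operatorname{ob}_{k+1}\in H^1(X,E)_\zeta$. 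By the transversality hypothesis, $\operatorname{ob}_{k+1}\in s\cdot H^1(X,\cO_X)+\zeta\cdot H^0(X,E)$. Now two elementary modifications preserve the mod--$t^{k+1}$ solution and translate $\operatorname{ob}_{k+1}$: replacing $\sigma_k$ by $\sigma_k+\rho$ with $\rho\in H^0(X,E)$ changes $D\sigma$ by $t^k D\rho=t^{k+1}D_1\rho+O(t^{k+2})$ (using $D_0\rho=0$), hence $\operatorname{ob}_{k+1}$ by $\zeta\cdot\rho$; prescribing the so--far--unused class $[c^{(k+1)}]=\eta\in H^1(X,\cO_X)$ changes $\Theta$ by $c^{(k+1)}\cdot s$, hence $\operatorname{ob}_{k+1}$ by $\eta\cdot s$. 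Choosing $\rho$ and $\eta$ appropriately we arrange $\operatorname{ob}_{k+1}=0$, so $\Theta=D_0\tau$ for some $\tau$, and then $\sigma+t^{k+1}(-\tau)$ solves the equation mod $t^{k+2}$. Passing to the inverse limit produces a formal arc $\hat\gamma$ with $\hat\gamma'(0)=\zeta$ together with a formal section extending $s$.

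Finally I would phrase the existence of such a pair as an algebraic system. On a neighbourhood of $\cO_X$ in $P$ represent the first terms of $R\pi_*\bigl((p_X^*E)\otimes\mathcal P\bigr)$, $\pi\colon X\times P\to P$, by a two--term complex of vector bundles $K^0\xrightarrow{\ d^0\ }K^1$ with $\ker(d^0\otimes\kappa(p))=H^0(X_p,E(L_p))$; we then seek local coordinates $\gamma(t)$ on $P$ and $v(t)\in K^0|_{\gamma(t)}$ with $d^0(v(t))=0$ and $v(0)=s$. The formal solution just built is a formal solution of this finite polynomial system, so Artin's theorem \cite{A} provides a solution over the henselization of $\mathbb{C}[t]$ at the origin agreeing with it to first order; such a solution is defined over an \'etale neighbourhood of $0$ in a smooth affine curve, whence an analytic (and algebraic) arc $\gamma\colon\Delta\to P$ with $\gamma(0)=\cO_X$, $\gamma'(0)=\zeta$, image containing an open set of an algebraic curve, and a family $s(t)\in H^0(X,E(\gamma(t)))$ with $s(0)=s$. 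As $\zeta\neq 0$ (and $s\neq 0$) the curve is nonconstant and $s(t)\neq 0$ for $0<|t|\ll 1$, so this curve lies in $\cW(E)$ and $\dim\cW(E)>0$.

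The substantive points are the careful bookkeeping in the twisted \v Cech complex --- in particular checking that the two displayed modifications act on $\operatorname{ob}_{k+1}$ exactly by $\zeta\cdot H^0(X,E)$ and $s\cdot H^1(X,\cO_X)$, and that they do not disturb the lower--order terms --- and the reduction of the formal deformation problem to a genuine system of equations to which \cite{A} applies; by contrast the integrability relation $\zeta\cdot\operatorname{ob}_{k+1}=0$, which is exactly what pairs with the hypothesis, is a two--line consequence of $D^2=0$.
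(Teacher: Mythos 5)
Your proposal is correct and follows essentially the same route as the paper: an inductive construction of a formal solution in which the order-$(k+1)$ obstruction is shown (via $D^2=0$, exactly as in the paper's Lemma~2.3 via $d^2=0$ for $d=\bar\partial_E+\sum t^iv_i$) to lie in $H^1(X,E)_\zeta$, is then killed using the transversality hypothesis by correcting the top-order section term (accounting for $\zeta\cdot H^0(X,E)$) and by choosing the next coefficient of the arc in $P$ (accounting for $s\cdot H^1(X,\cO_X)$), after which Artin's approximation theorem upgrades the formal solution to a convergent/algebraic one. The only difference is bookkeeping --- you work in a twisted \v Cech complex where the paper uses the Dolbeault operators $\bar\partial_E+\wedge v(t)$ --- plus a somewhat more careful reduction to a finite polynomial system (via a two-term complex computing $R\pi_*$) before invoking Artin, which is a welcome precision but not a different proof.
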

To prove our result we first solve the problem in the formal setting,  then we apply the powerful approximation theorem of Artin (See \cite{A}, Theorem 1.2) that assure the existence of a convergent solution and finally  the algebraic version, since the scheme $\cW(E)$ is algebraic.  

\vskip2mm

An important case of Proposition $\ref{trans}$ is when $E$ is a line bundle and this is in connection with the semi-regularity. In fact, we remark in section 2.1 that an effective Cartier divisor $D$ which is semi-regular (See definition 2.4 below) satisfies $H^1(X,E)=H^1(X,E)_{\zeta}=s\cdot H^1(X,\mathcal O_X)$, where $E=\mathcal O_X(D)$, $s\in H^0(X,E)$ is a section such that $D$ is the the zero-locus of $s$ and $\zeta\cdot s=0$ for some $\zeta$. The semi-regularity gives also information on the infinitesimal structure of the Hilbert scheme $\mathcal H_{X,D}$ that parametrizes effective divisors on $X$ with fixed Chern class (See section 2). To be more precise, the Severi-Kodaira-Spencer theorem asserts that if $D$ is a semi-regular divisor, then $\mathcal H_{X,D}$ is smooth at $[D]$ (See e.g. \cite{Bh} Theorem 1.2 part (i),  \cite{M}, p. 157). In this context we give another proof of this theorem by applying the transversality condition of Proposition \ref{trans}, in particular the semi-regularity is an open condition on the locus $\cW(E)\subset\text{Pic}(X)$.

\vskip3mm

To illustrate our second result (See section 3), following Miguel Barja (See \cite{B}), we let  
\begin{equation} r_c=r_c(E)=\min_{L\in P} h^0(E(L)),\label{conrank}
\end{equation}
be {\it{the continuous rank of $E$.}} Note that $r_c(E)>0 \iff \cW(E)=P$. We will see that $r_c\geq h^0(E)-h^1(E).$ 
This inequality should be well-known and it certainly will sound familiar to any expert on deformation theory.
A  stronger result will be given using the Koszul condition.  We have :
\begin{proposition} For any $\zeta\in H^1(X,\cO_X)$, set $h^1(E)_\zeta=\dim H^1(X,E)_\zeta.$  It holds $r_c(E)\geq h^0(E)-h^1(E)_\zeta\geq h^0(E)-h^1(E).$ \label{crank} \end{proposition}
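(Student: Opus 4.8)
The plan is to reduce the inequality $r_c(E)\ge h^0(E)-h^1(E)_\zeta$ to the transversality statement of Proposition~\ref{trans}, applied not to a single section but to a suitable subspace of sections simultaneously. Fix $\zeta\in H^1(X,\cO_X)$ and consider the cup product map $\cdot\zeta\colon H^0(X,E)\to H^1(X,E)$; its image lands in $H^1(X,E)_\zeta$ since $\zeta\cdot(\zeta\cdot s)=0$ by graded-commutativity of cup product. Let $V=\ker(\cdot\zeta\colon H^0(X,E)\to H^1(X,E))$, so that $\dim V=h^0(E)-\dim(\zeta\cdot H^0(X,E))\ge h^0(E)-h^1(E)_\zeta$. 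Every $s\in V$ satisfies the first-order unobstructedness $\zeta\cdot s=0$, and the second inequality $h^0(E)-h^1(E)_\zeta\ge h^0(E)-h^1(E)$ is immediate from $H^1(X,E)_\zeta\subseteq H^1(X,E)$.

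The core step is to upgrade this first-order deformability of the whole space $V$ to the statement that, along a suitable curve $\gamma\colon\Delta\to P$ through $\cO_X$ with $\gamma'(0)=\zeta$, every section in $V$ deforms. First I would run the formal/Artin-approximation machinery behind Proposition~\ref{trans} with $s$ replaced by a frame of $V$: one solves order by order the system of equations for a family $s_i(t)\in H^0(X,E(\gamma(t)))$ with $s_i(0)$ ranging over a basis of $V$, the obstruction to each step living in a quotient of $H^1(X,E)$ that is killed precisely by a transversality hypothesis of the form $H^1(X,E)_\zeta=(\text{sections})\cdot H^1(X,\cO_X)+\zeta\cdot H^0(X,E)$. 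The point is that for the continuous-rank estimate one does not actually need the hypothesis to hold: one chooses $\gamma$ so that the generic fiber $h^0(E(\gamma(t)))$ equals $r_c(E)$ (possible since $r_c$ is the generic value and $\{h^0\le r_c\}$ is a nonempty Zariski-open subset of $P$, so it meets every curve through a point where it is finite), and then upper semicontinuity of $h^0$ forces $\dim(\text{limit of the } V\text{-deformations})\le r_c(E)$. Since the limit subspace contains the image of $V$ under specialization and $V$ deforms flatly, one gets $\dim V\le r_c(E)$.

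Concretely I would argue as follows: pick a one-parameter subgroup-type curve $\gamma$ in $P$ with $\gamma'(0)=\zeta$ along which $h^0(E(\gamma(t)))=r_c$ for $t\ne0$ small; the sections over $\Delta^*$ form a locally free sheaf $\cS$ of rank $r_c$ on the punctured disk, and its extension across $0$ (Deligne's canonical extension, or simply the reflexive hull, $\Delta$ being a curve) has rank $r_c$ with a fiber $\cS_0\subseteq H^0(X,E)$ of dimension $\le r_c$ by semicontinuity. The deformation argument shows $V\subseteq\cS_0$: each $s\in V$, being first-order unobstructed in the direction $\zeta$, extends to a formal and hence (Artin) convergent section of $E(\gamma(t))$, which is a holomorphic section of $\cS$ over $\Delta^*$ specializing to $s$ at $0$. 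Hence $h^0(E)-h^1(E)_\zeta\le\dim V\le\dim\cS_0\le r_c(E)$, as desired.

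The main obstacle is the middle step — showing that \emph{every} $s\in V$ (not just one) genuinely deforms along the \emph{same} curve $\gamma$, and that the deformed sections remain holomorphic sections of the extended bundle $\cS$ with a controlled limit. The subtlety is that Proposition~\ref{trans} as stated deforms a single section under a transversality hypothesis that we are deliberately not assuming; so one must either invoke the Artin-approximation solution of the formal deformation problem for the tautological family of \emph{all} sections in $V$ at once (observing that $\zeta\cdot s=0$ for $s\in V$ is exactly the obstruction vanishing needed at first order, and that higher obstructions for the full family are handled uniformly by the same cohomological input), or argue more softly via semicontinuity on the total space of the universal sheaf $\cE\to P$ of sections. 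I would present the semicontinuity route as the clean one: it avoids re-running the order-by-order analysis and makes transparent why only the generic value $r_c$ enters.
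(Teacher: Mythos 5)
Your reduction to the subspace $V=\ker(\cdot\zeta\colon H^0(X,E)\to H^1(X,E))$ and the final semicontinuity endgame are reasonable, but the central step has a genuine gap: you assert that each $s\in V$, ``being first-order unobstructed in the direction $\zeta$, extends to a formal and hence (Artin) convergent section'' of $E(\gamma(t))$. Vanishing of the first-order obstruction $\zeta\cdot s=0$ does not imply the existence of a formal solution: there are higher obstructions, namely the classes $[v f_1],[v f_2],\dots$ arising at each order of equation (\ref{formale2}), and these need not vanish (this is exactly the phenomenon studied in \cite{GL2}). Concretely, if $\cW(E)$ has a non-reduced isolated point at $\cO_X$, with $\zeta$ a tangent direction and $s$ a corresponding section, then $s\in V$ but $s$ does not deform along any curve through $\cO_X$, so $r_c(E)=0$ while $\dim V\ge 1$, and your claimed chain $\dim V\le\dim\cS_0\le r_c(E)$ fails; the Proposition remains true in that situation only because the nontrivial obstruction spaces force $h^0(E)-h^1(E)_\zeta\le 0$. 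What you actually need is a lower bound on the subspace $W_\zeta\subseteq V$ of sections admitting a full formal linear solution, not on $V$ itself.

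That missing lower bound is precisely the content of the paper's Lemma 3.1. One builds a decreasing chain $K^0_1(v)\supseteq K^0_2(v)\supseteq\cdots$ (your $V$ is $K^0_1(v)$), where $K^0_{i+1}(v)$ is the kernel of the next obstruction map $v^{(i+1)}\colon K^0_i(v)\to K^1_i(v)=\mathrm{Coker}\,(v^{(i)})$, and observes that the index $\dim K^0_i(v)-\dim K^1_i(v)=h^0(E)-h^1(E)_\zeta$ is preserved at every step; since the chain stabilizes, the stable kernel $K^0_m(v)\subseteq W_\zeta$ still has dimension at least $h^0(E)-h^1(E)_\zeta$ even though it may be strictly smaller than $V$. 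Once that is in place, an endgame like yours works (the paper's version: for generic $\zeta$ the curve $\exp(t\zeta)$ is Zariski dense, the formal solutions force the polynomial equations of $\cW^k(E)$ to vanish identically along it for $k=h^0(E)-h^1(E)_\zeta-1$, hence $\cW^k(E)=P$). Without the index-preservation argument through the tower of obstruction maps, the first-order count alone cannot give the stated bound.
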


When $X$ is a smooth curve we have $h^1(X,E)=h^1(X,E)_\zeta$, and by Riemann-Roch theorem the Proposition \ref{crank} gives $r_c\geq \chi(E)=c_1(E)-e(g-1)$. Thus for $e=1$, that is,  when $E$ is a line bundle we obtain $\cW(E)=P$ for $c_1(E)\geq g,$  a version of the Jacobi inversion theorem.

\vskip2mm
In  section $4$ we consider the case when $E=\wedge^k\Omega^1_X=\Omega_X^k$ is the sheaf of holomorphic $k-$forms on $X$. Let $K_X=\Omega^n_X$ be the canonical sheaf on $X$. Using the $\partial\bar{\partial}-$ lemma (See \cite{V}, Proposition 6.17) we obtain a quick proof of a piece of the generic vanishing theorem of Green-Lazarsfeld.  We give it by completeness, with not claim of originality. 
The approach is very similar to the one given by Clemens and Hacon in \cite{CH}, but again we use the formal approximation theorem of Artin to show that the higher order obstructions vanish.

\vskip2mm

In section $5$ we extend to higher dimensional varieties the result given in (\cite{MPP1}, Theorem 1.3) on the fixed part of the paracanonical base locus of surfaces. To explain this, let $P^{\kappa}$ be the isomorphism classes of line bundles algebraically equivalent to $K_X.$ The morphism $L\to L\otimes K^{-1}_X$ gives an identification of $P^\kappa$ and  $P$. Let  $\text{Div}^{\kappa}(X)$ be the projective variety that parametrizes effective divisors with class $\kappa$, $\text{Div}^{\kappa}(X)$ is the {\em paracanonical system} of $X$. We consider the map 
 $\rho:\text{Div}^{\kappa}(X)\to P^\kappa, D\mapsto\mathcal O_X(D).$ The fiber $\rho^{-1}([L])$
 is isomorphic to the linear system $|L|$. If $r_c(K_X)>0$, there is a unique irreducible component $\text{Div}^{\kappa}(X)_{\text{main}}$
  of $\text{Div}^{\kappa}(X)$ that maps surjectively onto $P^\kappa$. The variety
  \begin{equation}\text{Div}^{\kappa}(X)_{\text{main}}\label{main}\end{equation}
  is called {\it{main component of }}$\text{Div}^{\kappa}(X)$ of the paracanonical system.

\vskip1mm
We define the {\it{Albanese variety}} $A$ of $X$, to be the dual of $P: A=Pic^0(P)$. Let $a: X\to A$ be the {\it Albanese morphism} (See \cite{MPP1}). We say that $X$  has {\it maximal Albanese dimension}  if $\dim X=\dim a(X)$ and it is of {\it Albanese general type} if in addition $q=q(X)>\dim X$. We recall that generic vanishing theorems for the canonical bundle implies that $r_C(K_X)=\chi(K_X) $ for $X$ of maximal Albanese dimension (See \cite{GL1}). If in addition $X$ is not fibred onto a variety $Y$ with $0<\text{dim}(Y)<\text{dim}(X)$ and with any smooth projective model $\widetilde Y$ of $Y$ of Albanese general type, then $\chi(K_X)\geq q-\dim X$. 
(See \cite{PP}, Theorem A).

We define respectively the base locus $Z_K$ of the canonical system and $Z_\kappa$ the base locus of the main paracanonical component by:
$$Z_K= \{p \in X: p\in D \  \forall D\in |K_X|\}.$$
$$Z_\kappa=\{p \in X:  p\in D \ \forall D\in \text{Div}^{\kappa}(X)_{\text{main}}\}.$$

\vskip1mm
Our result is the following:

\begin{proposition}\label{zeri}
Let $X$ be a variety of Albanese general type with $\chi(K_X)>0$, then  $Z_\kappa\subset Z_{K}.$ \end{proposition}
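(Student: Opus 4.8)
The goal is to show that every point $p$ lying on all canonical divisors also lies on every divisor of the main paracanonical component, i.e. $Z_\kappa \subset Z_K$. Wait — we need $Z_\kappa \subset Z_K$, so we must show: if $p \in D$ for all $D \in |K_X|$, then $p \in D$ for all $D \in \mathrm{Div}^\kappa(X)_{\mathrm{main}}$. Let me think about the logic. A point in $Z_K$ is a base point of the canonical system; we want it to be a base point of the main paracanonical component. Actually the natural direction: the canonical system $|K_X|$ is the fiber $\rho^{-1}([K_X])$, which sits inside $\mathrm{Div}^\kappa(X)_{\mathrm{main}}$ (since under Albanese general type hypotheses $r_c(K_X) = \chi(K_X) > 0$, so the main component exists and dominates $P^\kappa$, and $[K_X] \in P^\kappa$ is in its image). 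Hence $Z_\kappa \subseteq Z_K$ is immediate from the inclusion of divisor families — no, that gives $Z_\kappa \subseteq Z_K$ only if $|K_X|$-divisors are among the main-component divisors, which they are, so any common point of all main-component divisors is in particular a common point of all $K_X$-divisors. That is the easy inclusion and gives exactly the statement. So the real content must be showing that $|K_X| \subset \mathrm{Div}^\kappa(X)_{\mathrm{main}}$, i.e. that the canonical system is contained in (the fiber over $[K_X]$ of) the main component, which is not automatic since $\mathrm{Div}^\kappa(X)$ may be reducible and $|K_X|$ could a priori lie in another component.

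\medskip
The plan is therefore as follows. First, invoke the generic vanishing theorem as quoted in the introduction: since $X$ has maximal Albanese dimension, $r_c(K_X) = \chi(K_X)$, and by hypothesis $\chi(K_X) > 0$, so $\cW(K_X) = P^\kappa$ and the main component $\mathrm{Div}^\kappa(X)_{\mathrm{main}}$ is well-defined and dominates $P^\kappa$ with generic fiber of dimension $\chi(K_X) - 1$. Second, the key step: show that a general section $s \in H^0(X, K_X)$ deforms in every direction $\zeta \in H^1(X,\cO_X)$, i.e. that $[D_s] = Z(s)$ lies on a component of $\mathrm{Div}^\kappa(X)$ dominating $P^\kappa$ — which by uniqueness is the main component. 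This is exactly where Proposition \ref{trans} enters: one must check, for $E = K_X$ and general $s$ with $\zeta \cdot s = 0$, the transversality identity $H^1(X,K_X)_\zeta = s \cdot H^1(X,\cO_X) + \zeta \cdot H^0(X,K_X)$. For $E = K_X$, Serre duality and the $\partial\bar\partial$-lemma (as used in section $4$ of the paper) control $H^1(X,K_X)$ and the cup product maps: the point is that cup product with $\zeta$ on $H^0(\Omega^n_X)$ is dual to cup product with $\zeta$ on $H^0(\cO_X) \to H^1(\cO_X)$, and the generic-vanishing machinery forces $H^1(K_X)_\zeta$ to be accounted for by $s\cdot H^1(\cO_X)$ plus the image of $\cdot\zeta$. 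Since this holds for general $s$, the general canonical divisor lies on the main component; as base loci only shrink when one enlarges a family, $Z\big(\rho^{-1}([K_X])\big) \subseteq Z_\kappa$ fails in the wrong direction — rather, because general canonical divisors are main-component divisors, $Z_\kappa \subseteq \bigcap_{s \text{ general}} D_s = Z_K$ (the intersection over a general, hence Zariski-dense, family of canonical divisors equals $Z_K$).

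\medskip
To make the last implication precise: let $p \in Z_\kappa$, so $p \in D$ for every $D \in \mathrm{Div}^\kappa(X)_{\mathrm{main}}$. By the transversality step, the set of $D \in |K_X|$ which lie on the main component is dense in $|K_X|$ (it contains the divisors of all general sections), hence equals $|K_X|$ because "$p \in D$" is a closed condition and the divisors on the main component through $p$ form a closed subset. Thus every $D \in |K_X|$ is a main-component divisor, so $p \in D$ for all $D \in |K_X|$, i.e. $p \in Z_K$. This proves $Z_\kappa \subset Z_K$.

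\medskip
The main obstacle I expect is verifying the transversality hypothesis of Proposition \ref{trans} in the form $H^1(X,K_X)_\zeta = s\cdot H^1(X,\cO_X) + \zeta \cdot H^0(X,K_X)$ for a general section $s \in H^0(X,K_X)$ and a general (or arbitrary) $\zeta$. This requires a careful Hodge-theoretic analysis: one translates everything via Serre duality into statements about $H^0(\Omega^1_X)$-multiplication on the cohomology of $\cO_X$ and $\Omega^{n-1}_X$, uses the $\partial\bar\partial$-lemma to identify the relevant kernels with genuine Hodge pieces, and then invokes the Albanese-general-type hypothesis (which gives enough independent holomorphic $1$-forms) to conclude that the cup-product span is as large as required. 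Handling the locus where this fails — a proper closed subset of $H^0(X,K_X)$ — and showing it does not interfere with the density argument is the delicate point; everything else is formal once Proposition \ref{trans} is available.
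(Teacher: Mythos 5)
Your reduction is logically sound as far as it goes: if every canonical divisor (or a dense subset of $|K_X|$) lay on $\text{Div}^{\kappa}(X)_{\text{main}}$, then $Z_\kappa\subset Z_K$ would follow at once. The gap is that this premise is false in general, and your density argument cannot establish it. A canonical divisor $(s)$ can lie on the main component only if $s$ deforms to first order in some nonzero direction, i.e.\ only if $\ker\bigl(\cdot s\colon H^1(X,\cO_X)\to H^1(X,K_X)\bigr)\neq 0$; for a \emph{general} $s$ this kernel is typically zero (already for surfaces, where $h^1(K_X)=q$ and $\cdot s$ is generically injective), so the locus $Y=\text{Div}^{\kappa}(X)_{\text{main}}\cap|K_X|$ of deformable canonical divisors is in general a proper closed subvariety of $|K_X|$: its dimension is controlled by $\dim U_{\text{main}}=\chi(K_X)+q-2$ and the fibres of $\pi_1$, not by $\dim|K_X|$. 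Hence ``main-component divisors are dense in $|K_X|$, and containing $p$ is closed, so all of $|K_X|$ is in the main component'' breaks down at the first step. A secondary misdirection: the ``main obstacle'' you identify, namely verifying the transversality hypothesis of Proposition \ref{trans} for $E=K_X$, is not needed here at all, since for $E=\Omega_X^k$ unobstructedness holds with no transversality hypothesis by the $\partial\bar\partial$-lemma (Lemma \ref{dbar} and Proposition \ref{gl}).

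The missing idea is to replace density by linear non-degeneracy. The paper shows that the sections $s$ with $(s)\in Y$ \emph{span} $H^0(X,K_X)$: if they spanned a proper subspace $W$, pick $\theta$ orthogonal to $W$ for the pairing (\ref{her}); since the fibre of $\pi_2$ over a generic $(\overline{\beta})$ lies in $\bP(\ker\overline{\beta})$ and $\pi_2|_{U_{\text{main}}}$ is dominant, $\theta\in\ker(\overline{\beta})^{\perp}=\text{Im}(\wedge\beta)$ for generic, hence all, $\beta\in H^0(X,\Omega^1_X)$ by Lemma \ref{ort}; the hypothesis $q>n$ then produces a nonzero $\beta$ vanishing at a point where $\theta$ does not, a contradiction. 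Once $Y$ spans, any $p\in Z_\kappa$ is a common zero of a spanning set of sections of $K_X$, hence of all of them, which gives $Z_\kappa\subset Z_K$. This spanning argument, resting on Serre duality, the hermitian pairing, and the Albanese-general-type hypothesis $q>\dim X$, is the actual content of the proof and is absent from your proposal.
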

\vskip2mm

\noindent{\bf{Acknowledgments.}} The authors give thanks to Rita Pardini and Miguel Angel Barja for fruitful discussions and suggestions. Special  thanks are due to the referee for valuable comments which helped us to improve the manuscript.

The first named author give thanks to the Department of Mathematics of the University of Pavia for their warm hospitality and the use of their resources during a sabbatic year.
 

\section{Deformations of vector bundles and transversality}\label{defo}
Let  $E\to X$ be a holomorphic vector bundle of rank $e$. Let $\cA^{p,q}(E)$ be the sheaf of $\cC^ \infty $ forms of type $(p,q)$ with values on $E$, $\cC^\infty(E)=\cA^{0,0}(E)$. We have an operator $$\bar \partial_E: \cA^{p,q}(E) \to \cA^{p,q+1}(E)$$ such that for any open subset $U\subset X$, 
 
 $$U\to E(U)= {\ker \bar\partial _E}{_{|U}} : \cC^\infty(E)(U)\to \cA^{0,1}(E)(U)$$ 
 gives the sheaf of holomorphic sections of $E$. The correspondence $E \leftrightarrow \bar\partial _E$ with the integrability condition is given by $\bar\partial^2 _E=0$ (cf. e.g. \cite{W}). A deformation of $E$ can be seen as a deformation of $\dbar_E$, that is, an operator $\dbar _E+T:\cA^{p,q}(E) \to \cA^{p,q+1}(E)$ satisfying  the integrability condition $(\bar\partial_E+T)^2=0$.  Let  $\gamma:\Delta \to P$ be an analytic curve in $P$ and assume that $\gamma(0)=\cO_X$. The operators associated to the deformation $E(\gamma(t))=E\otimes\gamma(t)$, $\gamma(t)\in P$, have a considerably simple linearization. To see this, consider the exponential map $\exp: H^1(X,\mathcal O_X)\to P,$ write $\gamma(t)=\exp(\alpha(t))$, where $\alpha:\Delta \to H^1(X,\mathcal O_X)$. We have $\alpha(t)= [v(t)]$, where $v(t)\in \cA^{0,1}(X)$ is a $(0,1)$-form such that $\bar{\partial}(v(t))=0$. A canonical way to represent $\alpha(t)$ is to use the harmonic representative, that is, $v(t)=\overline{\omega}(t)$ where $\omega(t)\in H^0(X,\Omega^1_X)$. The corresponding family of operators on $E(\gamma(t))$ can be given by (See \cite{GL2}, p. 92)
\begin{equation} \bar \partial_{E(\gamma(t))}=\bar \partial_{E}+\wedge v(t). \label{op} \end{equation}
$$ \bar \partial_{E(\gamma(t))}(s)=\bar \partial_{E}(s)+s\wedge v(t).$$

\noindent The integrability conditions $ \bar\partial_{E(\gamma(t))}^2=0$ is satisfied since $\bar \partial v(t)=0$ and 
$v(t)\wedge v(t)=0.$ 
\vskip2mm

Given $\zeta_1=[v_1]\in H^1(X,\cO_X)$ and $f_0\in H^0(X,E)$, the cup product $[v_1\cdot f_0] \in H^1(X,E)$ 
gives the first order obstruction  to deform $f_0$ in the $\zeta_1$-direction. In fact, if $[v_1\cdot f_0]=0$, we can find $f_1\in\cC^\infty(E)$ such that $v_1\cdot f_0+\dbar_Ef_1=0$, then 

$$(\dbar_E+tv_1)(f_0+tf_1)=t^2 (v_1\cdot f_1)=0 \mod \ t^2.$$

Now we take sequences $f_i\in H^0(X,E)$, $v_i\in \cA^{0,1}(\cO_X)$ satisfying $\dbar v_i=0$ and
 $[v_i]\in H^1(X,\cO_X)$. We write the {\em formal} equation:
\begin{align}\label{formale}
(\dbar_E+tv_1+t^2v_2+\cdots+t^nv_n+\cdots)\cdot(f_0+tf_1+t^2f_2+\cdots+t^nf_n+\cdots)=0,
\end{align}
\noindent in other words, such that the following is satisfied: $$\dbar_Ef_0=0,\hskip3mm \dbar_Ef_1+v_1f_0=0,\hskip3mm \dbar_Ef_2+v_1f_1+v_2f_0=0,
$$ 

\noindent and in general:

\begin{align} \label{formale2} \dbar_Ef_n+v_1f_{n-1}+\cdots+v_kf_{n-k}+\cdots+v_nf_0=0.\end{align}
\begin{definition} Fix $f_0\in H^0(X,E)$ and $\zeta_1=[v_1]\in H^1(X,\cO_X)$. We say that the couple $(\zeta_1,f_0)$ 
allows a formal solution, if there exists a sequence $(v_i,f_i)$ satisfying the formal equation (\ref{formale}).
We say moreover that the formal solution is linear if in (\ref{formale}) we can assume $v_i=0$ for $i>1.$
\label{nobs}
\end{definition}

The following proposition is a consequence of the  Artin's approximation theorem (See \cite{A}, Theorem 1.2). 

\begin{proposition}\label{convergence} Let $f_0\in H^0(X,E)$ and  $\zeta_1\in H^1(X,\cO_X)$ be such that $(\zeta_1,f_0)$ allows a formal solution. Then there is an analytic curve  
$\gamma:\Delta \to P$ such that $\gamma(0)=\cO_X$ and $\gamma'(0) =\zeta_1$, and a family of sections $s(t) \in H^0(X,E(\gamma(t)))$ such that $s(0)=f_0$. In other words the direction $\zeta_1$ is not obstructed for $s$. In particular $\dim \cW(E)>0$ and we may assume that the image of  $\gamma$ contains an open set of an algebraic curve.\end{proposition}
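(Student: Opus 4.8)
The plan is to reduce Proposition \ref{convergence} to a direct application of Artin's approximation theorem by reinterpreting the existence of a formal solution as the existence of a formal point on a suitable scheme over which we already have an algebraic model, namely a neighbourhood of $[\cO_X]$ in $\cW(E)$ together with the section data.

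First I would set up the algebraic object to which Artin's theorem \cite{A} is applied. Consider the incidence scheme
$$\mathcal{S}=\{(L,s):L\in P,\ s\in H^0(X,E(L))\}\to P,$$
which is naturally a scheme of finite type over $\mathbb C$ (it is a cone over $\cW(E)$; one constructs it by semicontinuity, or as $\mathrm{Spec}$ of a symmetric algebra built from a complex computing $R\pi_*(E\otimes\mathcal{P})$ on $P$ for the Poincaré bundle $\mathcal{P}$). The point $(\cO_X,f_0)$ lies on $\mathcal{S}$. A formal arc on $\mathcal{S}$ through this point with prescribed $1$-jet $\zeta_1$ in the $P$-direction is exactly a compatible system of solutions of the truncations of the equation $(\bar\partial_E+\sum t^i v_i)(\sum t^j f_j)=0$ modulo $t^{N}$ for all $N$ — that is what equations (\ref{formale}) and (\ref{formale2}) encode once one passes to the Dolbeault description of $H^0(X,E(\gamma(t)))$ via (\ref{op}) and expands $\gamma(t)=\exp(\sum t^i[v_i])$. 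So the hypothesis "$(\zeta_1,f_0)$ allows a formal solution" says precisely that $\mathcal{S}$, viewed as a $\mathbb C$-scheme of finite type, has a $\mathbb C[[t]]$-point $\widehat\sigma$ reducing to $(\cO_X,f_0)$ at $t=0$ and with the correct derivative $\zeta_1$.

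Next I would invoke Artin's approximation theorem: since $\mathcal{S}$ is of finite type over $\mathbb C$, the set of $\mathbb C\{t\}$-points (convergent power series) is dense, in the Artin topology, in the set of $\mathbb C[[t]]$-points; concretely, given $\widehat\sigma$ and any $c\ge 1$ there is a convergent arc $\sigma$ agreeing with $\widehat\sigma$ modulo $t^{c}$. Taking $c\ge 2$ ensures that the analytified arc $\sigma\colon\Delta\to\mathcal{S}$ has the same $1$-jet, hence its projection $\gamma\colon\Delta\to P$ satisfies $\gamma(0)=\cO_X$ and $\gamma'(0)=\zeta_1$, while the $H^0$-component gives a genuine holomorphic family $s(t)\in H^0(X,E(\gamma(t)))$ with $s(0)=f_0$. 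Because $\gamma$ is non-constant (its derivative is $\zeta_1\ne 0$; if $\zeta_1=0$ the statement is about the fibre direction and is handled by the analogous but easier deformation inside $|E|$), its image is a germ of an analytic curve lying on the algebraic variety $\cW(E)$, so $\dim\cW(E)>0$; and since a convergent arc on a finite-type $\mathbb C$-scheme lies on an algebraic curve through the point — e.g. by intersecting with generic hyperplanes or by Noether normalization — we get that the image of $\gamma$ contains an open subset of an algebraic curve.

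The main obstacle, and the step I would be most careful about, is the first one: making precise that a formal solution of the Dolbeault-level equations (\ref{formale2}) really is the same thing as a $\mathbb C[[t]]$-point of the \emph{finite-type} scheme $\mathcal{S}$. The subtlety is that (\ref{formale2}) is an equation in the infinite-dimensional spaces $\cC^\infty(E)$ and $\cA^{0,1}(\cO_X)$, whereas Artin's theorem needs a scheme of finite type; one must therefore show that solvability of the $\bar\partial$-equations at each order is governed by finitely many algebraic conditions (the cohomological obstructions $[v_1 f_{n-1}+\dots+v_n f_0]\in H^1(X,E)$ vanishing, together with the choice of $f_n$ modulo $H^0(X,E)$), and identify these with the defining equations and the deformation functor of $\mathcal{S}$ at $(\cO_X,f_0)$. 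Equivalently one identifies the formal solutions with formal arcs on the completed local ring $\widehat{\mathcal O}_{\mathcal{S},(\cO_X,f_0)}$ via the standard Kodaira–Spencer/Dolbeault dictionary, using Hodge theory to split the relevant exact sequences so that the harmonic gauge of (\ref{op}) is available; once this translation is in place, the rest is a formal consequence of \cite{A}.
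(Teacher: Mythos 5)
Your proposal is correct in its main thrust and uses the same engine as the paper: translate the formal Dolbeault solution into a formal arc on a finite-type $\bC$-scheme and then invoke Artin's approximation theorem to produce a convergent arc with the prescribed $1$-jet. The difference is the choice of scheme. The paper works only with $\cW(E)\subset P$: it takes an affine chart where $\cW(E)$ is cut out by polynomials $F_j$, checks that the formal curve $\tilde\gamma(t)=\exp(\sum\zeta_it^i)$ satisfies $F_j(\tilde\gamma(t))\equiv 0$ (because each truncation of the formal solution exhibits a section of $E(\tilde\gamma(t))$ over $\mathrm{Spec}(\bC[t]/t^n)$), and approximates. You instead work on the incidence scheme $\mathcal S=\{(L,s)\}$, which is more work to set up (via a two-term complex computing $R\pi_*(E\otimes\mathcal P)$) but pays off: your arc carries the section data, so you get the family $s(t)$ with $s(0)=f_0$ directly, a point the paper's proof leaves implicit (approximating only in $\cW(E)$ does not by itself pin down $s(0)=f_0$ when $h^0(E)>1$). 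You are also right, and more careful than the paper's ``$c=1$'', that one needs $c\ge 2$ to preserve the derivative. Your identification of the delicate step --- that solvability of the $\bar\partial$-equations order by order is governed by finitely many algebraic conditions, so a formal Dolbeault solution really is a $\bC[[t]]$-point of $\mathcal S$ --- is exactly the point the paper asserts without comment.

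One small correction: your justification of the last claim (that the image of $\gamma$ may be taken to contain an open set of an algebraic curve) does not work as stated. A given convergent arc through a point of a positive-dimensional variety need not lie on any algebraic curve, and intersecting with generic hyperplanes or Noether normalization will not force it to. The intended argument, and the paper's, is that since the defining equations $F_j$ are polynomials one may \emph{replace} $\gamma$ by an algebraic (e.g.\ algebraic power series) solution furnished by the algebraic form of Artin's approximation theorem; the ``we may assume'' in the statement licenses exactly this substitution.
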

\begin{proof}
We take a Zariski open set $W$ of $P$ that contains the origin $\cO_X$, that is, there is an embedding $\pi:W\hookrightarrow\bC^N$ such that the ideal of $\pi( \cW(E))\subset \bC^N$ is generated by polynomial equations $\{F_j\}.$ The formal curve $$\tilde\gamma(t)=\exp (\sum_{i=1}^\infty\ \zeta_i t^i)$$ satisfies the  equations $F_i(\tilde\gamma(t))\equiv 0$. By (\cite{A}, Theorem 1.2 for $c=1$) we can find an analytic curve $\gamma:\Delta \to P,$  $\gamma(0)=\cO_X$ and $\gamma'(0) =\zeta_1$. The last sentence follows again since  the  $F_j$ are polynomials. \qed\end{proof}

 We give the following
\begin{lemma}  Suppose that for $\zeta_i=[v_i]$ and $i\leq n$, we have solved equation (\ref{formale2}) in the $n$-th step, that is, 
$$
\dbar_Ef_n+v_1f_{n-1}+\cdots+v_kf_{n-k}+\cdots +v_nf_0=0 \label{ex}.
$$
Let  $y=v_1f_n+v_2f_{n-1}+\cdots+v_kf_{n-k+1}+v_nf_1$ be, then
\begin{enumerate} 
\item $\dbar_E (y)=0,$
\item   $v_1\wedge y\in \text{Im}\ \dbar_E $
\end{enumerate}
\end{lemma}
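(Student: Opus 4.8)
The plan is to prove both statements by a short formal power-series computation, using only the equations (\ref{formale2}) solved through step $n$, the fact that each $v_i$ is $\dbar$-closed, and the antisymmetry $v_i\wedge v_j=-v_j\wedge v_i$ of wedge products of scalar $(0,1)$-forms (the same identity behind the integrability condition recalled after (\ref{op})).

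First I would repackage the data as the truncated series $F(t)=\sum_{j=0}^{n}f_jt^j$ and $V(t)=\sum_{i=1}^{n}v_it^i$, with coefficients in $\cA^{0,0}(E)$ and in $\cA^{0,1}(X)$ respectively, and set $G(t):=\dbar_EF(t)+V(t)\wedge F(t)$. The hypothesis, that (\ref{formale2}) has been solved for all indices up to $n$, says precisely that the coefficients of $G(t)$ of degrees $0,1,\dots,n$ all vanish, so that
\begin{equation*}
G(t)=y\,t^{n+1}+y_2\,t^{n+2}+O(t^{n+3}),\qquad y_2:=v_2f_n+v_3f_{n-1}+\cdots+v_nf_2,
\end{equation*}
where $y$ is the form in the statement and $y_2\in\cA^{0,1}(E)$ is the next coefficient (the term $v_{n+1}f_0$ one might expect in $y_2$ is absent only because $v_{n+1}$ has not yet been chosen).

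The crux is then to apply $\dbar_E$ to $G(t)$. Using $\dbar_E^2=0$, the graded Leibniz rule $\dbar_E(V(t)\wedge F(t))=\dbar V(t)\wedge F(t)-V(t)\wedge\dbar_EF(t)$, the closedness $\dbar V(t)=0$, the relation $\dbar_EF(t)=G(t)-V(t)\wedge F(t)$, and the identity $V(t)\wedge V(t)=0$, one arrives at the clean formula $\dbar_EG(t)=-V(t)\wedge G(t)$. Plugging in the expansion of $G(t)$ and comparing coefficients finishes everything: the $t^{n+1}$-coefficient gives $\dbar_Ey=0$, which is (1), and the $t^{n+2}$-coefficient gives $\dbar_Ey_2=-v_1\wedge y$, whence $v_1\wedge y=\dbar_E(-y_2)\in\text{Im}\,\dbar_E$, which is (2). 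I expect this generating-series bookkeeping to be the cleanest route. The one subtlety to flag is that (2) forces one to track the expansion of $G(t)$ one order beyond what (1) needs; in a more hands-on argument (differentiate $y$ term by term, substitute $\dbar_Ef_m=-\sum_{j=1}^{m}v_jf_{m-j}$, reindex the resulting double sum by the weight $\ell=k+j$, so that each weight slice either cancels by antisymmetry, giving (1), or leaves the single term $-v_1\wedge v_{\ell-1}$, giving (2)) the corresponding care is needed for the top weight slice $\ell=n+2$, which cancels on its own because its index range is symmetric under $k\mapsto n+2-k$.
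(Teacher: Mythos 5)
Your argument is correct and is essentially the paper's own proof in different notation: the identity $\dbar_EG(t)=-V(t)\wedge G(t)$ is exactly the statement $d(dF)=0$ for the operator $d=\dbar_E+tv_1+\cdots+t^nv_n$ that the paper uses, and both proofs read off conclusions (1) and (2) from the $t^{n+1}$ and $t^{n+2}$ coefficients respectively. Your explicit identification of the next coefficient $y_2=v_2f_n+\cdots+v_nf_2$ and the remark about why $v_{n+1}f_0$ is absent are accurate refinements of the paper's ``$df=t^{n+1}y+t^{n+2}y_1+\cdots$''.
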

\begin{proof}Since $\dbar v_i=0$, then $\dbar_E(v_i f)=v_i\wedge\dbar_E(f)$. 
Define $d=\dbar_E +tv_1+\cdots +t^nv_n$. 
This is an operator $d: \cA^{p,q}(E)[t]\to \mathcal A^{p,q+1}(E)[t]$ satisfying $d^2=0$, where $\mathcal A^{p,q}(E)[t]:=\cA^{p,q}(E) \otimes \bC[t]$ and $\bC[t]$ is the ring of complex polynomials. Taking $f=f_0+tf_1+\cdots +t^nf_n\in \cA^{0,0}(E)[t]=\cC^\infty(E)[t]$,
we get $df=0 \mod t^n$ by our assumption. Making some computations we have that
$$df=t^{n+1}y+ t^{n+2}y_1+\cdots$$
where $y=v_1f_n+v_2f_{n-1}+\cdots+v_kf_{n-k+1}+v_nf_1$.
Now we have $d^2(f)=0$, thus
$$0=d(y+ty_1+\cdots)=(\dbar_E +tv_1+\cdots +t^nv_n)(y+ty_1+\cdots)$$ the first terms gives
$$\dbar_E(y)=0 \  , \  v_1\wedge y +\dbar_E (y_1)=0.$$
\qed\end{proof}

Now we prove the Proposition \ref{trans}:

\begin{proof} Suppose that $(\zeta_1=[v_1],f_0)$ satisfies the transversal condition: $\zeta_1H^0(X,E)+f_0H^1(X,\cO_X)=H^1(X,E)_{\zeta_1}$.  Moreover, assume that we have solved the equation (\ref{formale2}) in the $n$-th step, that is,
$$\dbar_Ef_n+\sum_{i=1}^n v_i f_{n-i}=0.$$ We would like to solve the $(n+1)$th step. The proposition will follow by induction. Define $y=\sum^n_{i=1}v_if_{n+1-i},$  we know that $\dbar_E y=0,$ $\zeta_1\cdot [y]=[v_1\cdot y]=0\in H^2(X,E)$ by the above lemma.
Let $[y]\in H^1(X,E)_{\zeta_1}$ be its class. By the transversal hypothesis there are $\zeta_{n+1}\in H^1(X,\cO_X)$ and $\tilde s\in H^0(X,E)$ such that
$$[y]= \zeta_1\cdot \tilde s+\zeta_{n+1}f_0,$$ that is, choosing a representative  $v_{n+1}$ for $\zeta_{n+1},$
$y=v_{n+1}f_0+ v_1\tilde s+\dbar_Eg.$   
Letting $f'_{n}=f_n-\tilde s$ we solve again the  $n$-th step equation 
$$\dbar (f_n-\tilde s)  +\sum_{i=1}^n v_i f_{n-i}=\sum_{i=1}^n v_i f_{n-i}=0.$$ 
Then letting $f_{n+1}=-g$ we solve also the $(n+1)$-th step:
$$\dbar (f_{n+1}) + v_1(f_n-\tilde s)+v_2f_{n-1}+\dots +v_nf_1+ v_{n+1}f_0.$$
We see that $f_0,...,f_{n-1},f'_{n}=f_n-\tilde s,f_{n+1}$ stabilizes in the first $n-1$ elements.
Then we can find a formal solution and by (\ref{convergence}) we prove our result.
\qed\end{proof}


\subsection{Semi-regularity} Let $X$ be a smooth complex projective variety. The {\it{semi-regularity}} of an effective Cartier divisor $D\subset X$ is an important example which satisfies the transversality condition of Proposition \ref{trans} for $E=L=\mathcal O_X(D)$, and is related also with the problem to determinate when a couple $(s,v)$ is non-obstructed in the sense of Proposition \ref{convergence}. To see this, let $s\in H^0(X,L)$ be a section such that $(s)_0=D$ is the zero-locus of $s$ and let $N_D=L|_D$  the normal sheaf of $D$ in $X$. The normal bundle exact sequence $0\to\mathcal O_X\xrightarrow{\cdot s}L\to N_D\to 0$ induces an exact sequence in cohomology
\begin{align}\cdots\to H^0(D, N_D)\xrightarrow{\delta_D}H^1(X,\mathcal O_X)\xrightarrow{\cdot s}H^1(X,L)\to H^1(D,N_D)\xrightarrow{\delta^1}H^2(X,\mathcal O_X)\to\cdots\end{align}

\begin{definition} The divisor $D$ is said to be {\it{semi-regular}} if $H^1(X,L)\to H^1(D,N_D)$ is the zero-map.\end{definition}

Let $\mathcal H_{X,D}$ be the Hilbert scheme  that parametrizes effective divisors $B$ on $X$ such that $c_1(\mathcal O_X(B))=c_1(\mathcal O_X(D))$. Is well known that first-order deformations 
$\tilde D\subset X\times\text{Spec}(\frac{k[t]}{(t^2)})$ of $D$ in $X$ are in one-to-one correspondence with global sections $\tilde s$ in $H^0(D, N_D)$, where $H^0(D, N_D)$ is the Zariski tangent space $T_{[D]}(\mathcal H_{D,X})$ to $\mathcal H_{X,D}$ at  $[D]$ (See \cite{S}, Proposition 3.2.1). The obstruction to lifting $\tilde D$ to a deformation over
$\text{Spec}(\frac{k[t]}{(t^n)})$ for some $n\geq 3$ determines a cohomological class $ob(\tilde s)$ in $H^1(D,N_D)$ (See \cite{S}, p. 131). If $ob(\tilde s)\neq 0$, then $\tilde s$ is called obstructed, otherwise it is unobstructed. The set of obstructions define a subspace in $H^1(D,N_D)$, and this obstruction space is contained in the kernel of the semi-regular map $H^1(D,N_D)\xrightarrow{\delta^1}H^2(X,\mathcal O_X)$ (See e.g. \cite{Bh}). In particular when $\delta^1$ is injective the section $\tilde s$ is non-obstructed.
\vskip1mm

\noindent Note that for $t\in H^0(X,N_D)=T_{[D]}(\mathcal H_{X,D})$ and $v=\delta_D(t)\in\text{Im}(\delta_D)\subseteq H^1(X,\mathcal O_X)$ we have that $s\cdot v=0$. From the exact sequence (6), semiregularity for $D$ is equivalent to have $H^1(X,L)=s\cdot H^1(X,\mathcal O_X)$, and by properties of cap product we have $v\cdot H^1(X,L)=v\cdot(s\cdot H^1(X,\mathcal O_X))=\{0\}\subset H^2(X,L)$, then $H^1(X,L)_v=\text{Ker}(H^1(X,L)\xrightarrow{v}H^2(X,L))=s\cdot H^1(X.\mathcal O_X)=H^1(X,L)$, thus the transversal condition of Proposition \ref{trans} is satisfied for the couple $(s, v)$.

\vskip2mm

The following theorem was first claimed by Severi and proved by Kodaira-Spencer (See e.g. \cite{Bh} Theorem 1.2 part (i), \cite{M}, p. 157). We give a proof of such theorem by applying the transversality criteria of Proposition \ref{trans}.

\vskip1mm

\begin{theorem}(Severi-Kodaira-Spencer). Let $X$ be a complex smooth projective variety. Let $D\subset X$ be an effective Cartier divisor and set $L=\mathcal O_X(D)$. If $D$ is semiregular, then Hilbert scheme $\mathcal H_{X,D}$ is smooth at $[D]$ of the expected dimension $h^0(X,L)-h^1(X,L)-1+h^1(X,\mathcal O_X)$.
\end{theorem}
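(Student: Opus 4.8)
The plan is to deduce smoothness of $\mathcal H_{X,D}$ at $[D]$ from the transversality criterion of Proposition \ref{trans} together with the convergence statement of Proposition \ref{convergence}, using the fact that $\mathcal H_{X,D}$ sits over $\mathcal W(L) \subset \mathrm{Pic}(X)$ via the map sending a divisor $B$ to $\mathcal O_X(B)$, with fibres the linear systems $|L'|$. First I would recall the standard infinitesimal picture: the Zariski tangent space $T_{[D]}\mathcal H_{X,D}$ is $H^0(D,N_D)$, and a first-order deformation $\tilde D$ corresponds to $t \in H^0(D,N_D)$; pushing through the connecting map $\delta_D$ of the normal-bundle sequence (6), we get $\zeta := \delta_D(t) \in H^1(X,\mathcal O_X)$ with $\zeta \cdot s = 0$ in $H^1(X,L)$, where $s \in H^0(X,L)$ cuts out $D$. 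Conversely every such $\zeta$ in $\mathrm{Im}(\delta_D)$ arises this way (up to the ambiguity of the fibre direction $H^0(X,L)/\langle s\rangle$, which corresponds to moving inside the linear system $|L|$). So it suffices to show that every tangent direction is unobstructed, i.e. integrates to an honest arc in $\mathcal H_{X,D}$.

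The key step is the observation already made in Section 2.1: semi-regularity of $D$ says precisely that $H^1(X,L) \to H^1(D,N_D)$ is zero, which by exactness of (6) is equivalent to $H^1(X,L) = s \cdot H^1(X,\mathcal O_X)$. Then for any $\zeta$ in the image of $\delta_D$ we have $\zeta \cdot H^1(X,L) = \zeta \cdot (s \cdot H^1(X,\mathcal O_X)) = 0$, since $\zeta \cdot s = 0$; hence $H^1(X,L)_\zeta = H^1(X,L) = s \cdot H^1(X,\mathcal O_X) \subseteq s\cdot H^1(X,\mathcal O_X) + \zeta \cdot H^0(X,L)$, and the transversality hypothesis of Proposition \ref{trans} holds for the couple $(s,\zeta)$. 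Therefore the direction $\zeta$ is unobstructed: there is an analytic arc $\gamma:\Delta \to P$ with $\gamma(0)=\mathcal O_X$, $\gamma'(0)=\zeta$, and a family $s(t) \in H^0(X,L(\gamma(t)))$ with $s(0)=s$; its zero divisors $D(t)=(s(t))_0$ give an arc in $\mathcal H_{X,D}$ through $[D]$ with the prescribed tangent vector. Running this over a basis of tangent directions and combining with the fibre directions coming from $|L|$, every element of $T_{[D]}\mathcal H_{X,D}$ is tangent to an actual arc, so $\mathcal H_{X,D}$ is smooth at $[D]$.

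Finally I would pin down the dimension. Smoothness forces $\dim_{[D]}\mathcal H_{X,D} = \dim T_{[D]}\mathcal H_{X,D} = h^0(D,N_D)$. From the exact sequence (6), $h^0(D,N_D) = h^0(X,L) - 1 + \dim \mathrm{Im}(\delta_D)$, and semi-regularity makes $H^1(X,L)\to H^1(D,N_D)$ zero, so $\mathrm{Im}(\delta_D) = \ker\big(H^1(X,\mathcal O_X)\xrightarrow{\cdot s} H^1(X,L)\big) = \ker\big(H^1(X,\mathcal O_X) \to s\cdot H^1(X,\mathcal O_X)\big)$; a diagram chase in (6) gives $\dim\mathrm{Im}(\delta_D) = h^1(X,\mathcal O_X) - h^1(X,L)$. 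Hence $h^0(D,N_D) = h^0(X,L) - h^1(X,L) - 1 + h^1(X,\mathcal O_X)$, which is the asserted expected dimension.

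I expect the main obstacle to be the bookkeeping that matches deformations of the \emph{divisor} $D$ with deformations of the \emph{section} $s$ together with deformations of the line bundle in $P$ — that is, controlling the linear-system direction $H^0(X,L)/\langle s\rangle$ in the fibre of $\rho$ and checking it contributes exactly the missing $h^0(X,L)-1$ to the count and no obstruction. Everything on the $\mathrm{Pic}^0$ side is handled cleanly by Proposition \ref{trans}, so the real care is in the exact-sequence (6) chase and in verifying that the family of zero divisors $D(t)$ genuinely traces out a smooth arc of the Hilbert scheme (no jumping of $h^0$, which is guaranteed because $s(t)$ stays a section of the correct bundle along $\gamma$).
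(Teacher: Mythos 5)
Your argument is essentially the paper's own proof: both reduce the statement to the observation that semi-regularity makes $H^1(X,\mathcal O_X)\xrightarrow{\cdot s}H^1(X,L)$ surjective (equivalently $H^1(X,L)=s\cdot H^1(X,\mathcal O_X)$), verify the transversality hypothesis of Proposition \ref{trans} for every couple $(s,w)$ with $w\in\mathrm{Im}(\delta_D)=\ker(\cdot s)$, and conclude with the same count $h^0(D,N_D)=h^0(X,L)-1+\dim\mathrm{Im}(\delta_D)=h^0(X,L)-h^1(X,L)-1+h^1(X,\mathcal O_X)$. The only difference is cosmetic and lies in the final inference --- you argue that the integrated arcs account for the whole tangent space, whereas the paper matches the local dimension of $\mathcal H_{X,D}$ with the tangent-space dimension using the openness of semi-regularity --- and both versions are stated at the same level of detail.
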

\begin{proof}As we have seen, the semiregularity is equivalent to the fact that the map $H^1(X,\mathcal O_X)\xrightarrow{\cdot s}H^1(X,L)$ is surjective, and in this case we have the transversality condition of Proposition \ref{trans} for couples $(s,w)$ for every $w\in\text{Im}(\delta_D)=\text{Ker}(H^1(X,\mathcal O_X)\xrightarrow{\cdot s}H^1(X,L))$. Since semiregularity is an open condition on the sections $s$, the dimension of the Hilbert scheme $\mathcal H_{X,D}$ is precisely $h^0(X,L)-1+\text{dim Im}(\delta_D)$ and the Severi-Kodaira-Spencer theorem follows.\qed\end{proof}

\subsection{Examples.} 

\noindent We give some examples where the Transversal condition of Proposition \ref{trans} is satisfied. In these examples $C$ will denote a non-hyperelliptic curve of genus $g\geq 3$.

\vskip2mm

\noindent (a).- Let $p\in C$ be a general point and let $u$ be a general extension in $\text{Ext}^1(K_C,\mathcal O_C(p))\simeq H^1(C,T_C(p))$, then the correponding coboundary map 
$\partial_u: H^0(C,K_C)\to H^1(C,\mathcal O_C(p))=(H^0(C,K_C(-p)))^{\vee}$ is surjective, so there exists on $C$ a rank two vector bundle $E$ fitting in an exact sequence $0\to\mathcal O_C(p)\to E\to K_C\to 0$ such that $h^0(C,E)=2$. 
\vskip1mm
\noindent{\it{Claim.}} We can find two linearly independent sections $s, t$ in $H^0(C,E)$ such that $s(p)=0$ and $t(p)\neq 0$. 

\noindent{\it{Proof of Claim.}} We will show that for the general extension in $\text{Ext}^1(K_C(-p),\mathcal O_C)$ the corresponding coboundary map $H^0(C,K_C(-p))\to H^1(C,\mathcal O_C)$ is injective. 

\noindent Let $\eta\in\text{Ext}^1(K_C(-p),\mathcal O_C)\simeq H^1(C,T_C(p))$ and let $0\neq\omega\in H^0(C,K_C(-p))$ such that 
the cup product $\eta\cdot\omega=0$. Let $\Lambda_{\omega}:=\{\eta\in H^1(C,T_C(p)): \omega\cdot\eta=0\}$ and set $Z:=Z(\omega)$ the divisor of $\omega$ which is of degree $2g-3$ and let $\Bbb P:=\Bbb P^{g-2}=\Bbb P(H^0(C,K_C(p)))$. Tensoring by $T_C(p)$ the exact sequence $0\to\mathcal O_C\xrightarrow{\omega}K_C(-p)\to K_C(-p)|_Z\to 0$ we have $0\to T_C(p)\xrightarrow{\omega}\mathcal O_C(p)\to\mathcal O_C(p)|_{Z}\to 0$. Taking cohomology one has that $\text{dim}(\Bbb P(\Lambda_{\omega}))=2g-5$ and $\text{dim Span}(\bigcup_{\omega\in\Bbb P}\Bbb P(\Lambda_{\omega}))\leq 2g-5+g-2=3g-7<3g-5=\text{dim}(\Bbb P(H^1(C,T_C(p))))$. This proves the claim.

\vskip2mm

\noindent Now consider the following diagram 

$$
\xymatrix{
& & & 0\ar[d] & \\
& 0\ar[d] & &\Bbb C_p=\mathcal O_p(p)\ar[d]&\\
0\ar[r] & \mathcal O_C\ar[d]\ar[r] & E\ar[r]^{\pi_1}\ar[d]_{Id} & E/\mathcal O_C\ar[d]\ar[r] & 0\\
0\ar[r] & \mathcal O_C(p)\ar[d]_{\delta_p}\ar[r]^{f^0}& E\ar[r]^{\pi_2} & K_C\ar[d]\ar[r]  & 0\\
&\mathcal O_p(p)\ar[d] &  & 0\\
& 0 
}(\star)
$$
\noindent Let $v=v_p$ be a non-zero element in $\text{Im}(\delta_p)\subset H^1(C,\mathcal O_C)$. From $(\star)$ we have an square diagram in cohomology :
$$
\xymatrix{
& H^0(C,\mathcal O_C(p))\ar[d]^{f^0}\ar[r]^{\cdot v} & H^1(C,\mathcal O_C(p))\ar[d]^{f^1}\\
& H^0(C,E)\ar[d]^{\pi_2}\ar[r]^{\cdot v} & H^1(C,E)\ar[d]^{\wr\wr}\\
& H^0(C,K_C)\ar[r]^{\cdot v} & H^1(C,K_C) &
}
$$
\noindent  where $f^0, f^1$ are induced maps in cohomology. Let $u\in H^0(C,\mathcal O_C(p))$ be a non-zero generator. Since $f^0$ is injective, the section $u\in H^0(C,\mathcal O_C(p))$ can be identified with $f^0(u)=s\in H^0(C,E)$, then $v\cdot s=0=v\cdot f^0(s)=f^1(v\cdot u)\in H^1(C,E)$. For the other side, the section $t\in H^0(C,E)$ satisfies that $w:=\pi_2(t)\neq 0$ in $H^0(C,K_C)$ and $0\neq v\cdot w\in H^1(C,K_C)$, then $v\cdot t\neq 0$ in $H^1(C, E)$, that is, we see that $v\cdot H^0(C,E)\neq 0$. Since $h^1(C,E)=1$, we have the transversal condition $v\cdot H^0(C,E)+s\cdot H^1(C,\mathcal O_C)=H^1(C,E)$.

\vskip1mm

\noindent (b).- Let $S=S^2(C)$ be the second symmetric product of $C$. For every $p\in S$ we have a natural divisor $X_p=\{p+q:q\in C\}$ in $S$. Let $N^1(S)_{\Bbb Z}$ be the N\'eron-Severi group of $S$ of numerically equivalent classes of divisors on $C^{(2)}$. Let  $x$ be the class of $X_p$ in $N^1(S)_{\Bbb Z}$. This class is an ample class and is independent of of the point $p$. Let $\delta'$ be the class in $N^1(S)_{\Bbb Z}$ of the diagonal divisor $\Delta=\{q+q:q\in C\}\subset S$.  If $C$ has very general moduli, it is known that  $N^1(S)_{\Bbb Z}$ is generated by the numerical equivalence classes $x$ and 
$\delta=\frac{\delta'}{2}$ (See e.g. \cite{ACGH}, p. 359). The intersection numbers between these numerical classes are $(x,x)=x^2=1,(x,\delta)=x\cdot\delta=1, (\delta,\delta)=(\delta)^2=1-g$.
\vskip1mm
\noindent For a non-hyperelliptic curve $C$ of genus three consider the divisor $D:=K_S-x=3x-\delta$ and the line bundle $L=\mathcal O_S(D)$. Let $s_D$ be the section of $L$ such that $D$ is the zero-locus of $s_D$. We have that $h^0(S,L)=1$, $\chi(L)=0$ and $h^2(S,L)=h^0(S,\mathcal O_S(x))=1$, then $h^1(S,L)=h^0(S,L)+1=2$. For the normal bundle $N_D$ we have $\text{deg}(N_D)=D^2=1$, so $h^0(N_D)=1$. Since $h^0(S,L)=h^0(S,\mathcal O_S)$, we have $0\to H^0(S, N_D)\xrightarrow{\delta} H^1(S,\mathcal O_S)\xrightarrow{\cdot s_D}H^1(S,L)\to H^1(S,N_D)\to\cdots$. Let $0\neq w\in H^0(N_D)$ be and let $v=\delta(w)$, we have that $v\cdot s_D=0$. Since $\cdot s_D$ is surjective, $D$ is semiregular and the transversal condition $v\cdot H^0(S,L)+s_D\cdot H^1(S,\mathcal O_S)=H^1(S,L)$ is clear.

\section{Continuous rank}
Now we study the continuous rank of a vector bundle defined $E$ as in (\ref{conrank}).
Fixing $\zeta\in H^1(X,\cO_X)$, we first define $$W_{\zeta}:=\{f\in H^0(X,E): (\zeta, f)\ \text{allows a formal linear solution}\}.$$ 

\noindent We remark that $W_{\zeta}$ is a subvector space of $H^0(X,E)$ because we consider only formal linear operators (see definition \ref{nobs}).

We will give an estimate of the dimension of $W_{\zeta}$. We recall that $H^1(X,E)_\zeta$ has been defined as the kernel  of the map $\zeta:  H^1(X,E)\to H^2(X,E)$
and $h^1(E)_\zeta=\dim H^1(X,E)_\zeta$ its dimension.
We consider the lines in $H^1(X,\cO_X)$ given by
$t\mapsto t\zeta$ and the corresponding  curves $\gamma(t)=\exp(t\zeta)$ in $P.$
If $v$ is a representative of $\zeta$ we also consider the operator
$\dbar_E+tv$.

\begin{lemma} $\dim W_\zeta\geq h^0(E)-h^1(E)_\zeta$. \end{lemma}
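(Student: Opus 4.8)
The plan is to produce a linear map from $H^0(X,E)$ whose kernel is contained in $W_\zeta$ and whose image sits inside $H^1(X,E)_\zeta$, and then read off the dimension estimate. Concretely, fix a representative $v$ of $\zeta$ and consider the cup-product map $\mu\colon H^0(X,E)\to H^1(X,E)$, $f_0\mapsto [v\cdot f_0]$. First I would observe that the image of $\mu$ lands in $H^1(X,E)_\zeta$: indeed $\zeta\cdot(\zeta\cdot f_0)=0$ in $H^2(X,E)$ because $v\wedge v=0$ and $\dbar v=0$, so $[v\cdot f_0]\in\ker(\zeta\colon H^1(X,E)\to H^2(X,E))=H^1(X,E)_\zeta$. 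Hence $\mathrm{rank}(\mu)\le h^1(E)_\zeta$, and by rank–nullity $\dim\ker\mu\ge h^0(E)-h^1(E)_\zeta$.

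Next I would show $\ker\mu\subseteq W_\zeta$, which is the heart of the argument. Take $f_0\in\ker\mu$, so $[v\cdot f_0]=0$ in $H^1(X,E)$; I claim $(\zeta,f_0)$ allows a formal linear solution, i.e. a solution of (\ref{formale}) with $v_i=0$ for $i>1$, which is the recursion $\dbar_E f_n + v\cdot f_{n-1}=0$ for all $n\ge 1$. This is solved by induction on $n$. The base case $n=1$ is exactly $[v\cdot f_0]=0$. For the inductive step, assume $f_0,\dots,f_n$ have been constructed with $\dbar_E f_k + v\cdot f_{k-1}=0$ for $k\le n$; set $y=v\cdot f_n$. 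Applying the Lemma preceding the proof of Proposition \ref{trans} in the purely linear case ($v_i=0$ for $i>1$, so $y=v_1 f_n$ in the notation there) — or checking directly using $\dbar_E(v\cdot f)=v\wedge\dbar_E f$ — one gets $\dbar_E y = v\wedge\dbar_E f_n = -v\wedge v\cdot f_{n-1}=0$, so $y$ defines a class $[y]\in H^1(X,E)$. Moreover $\zeta\cdot[y]=[v\wedge v\cdot f_n]=0$, so $[y]\in H^1(X,E)_\zeta$.

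The remaining point is to solve $\dbar_E f_{n+1} = -y$, i.e. to show $[y]=0$ in $H^1(X,E)$ — not merely in $H^1(X,E)_\zeta$. Here is where I would use that $f_0\in\ker\mu$: since $[v\cdot f_0]=0$, there is $g_1\in\cC^\infty(E)$ with $v\cdot f_0=-\dbar_E g_1$, and I may take $f_1=g_1$; more generally, the recursion can be arranged so that $f_n$ is built from $f_0$ by repeated application of a fixed homotopy operator. Then $y=v\cdot f_n$ is a coboundary because, inductively, $[v\cdot f_n]=0$: writing $f_n = h(v\cdot f_{n-1})$ for a suitable $\dbar_E$-homotopy $h$ (so $\dbar_E f_n = v\cdot f_{n-1}$ up to sign, consistent with the recursion), one checks $v\cdot f_n$ is $\dbar_E$-exact whenever $v\cdot f_{n-1}$ is. Thus $f_{n+1}$ exists and the induction closes, giving a formal linear solution; hence $f_0\in W_\zeta$. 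Therefore $W_\zeta\supseteq\ker\mu$, so $\dim W_\zeta\ge\dim\ker\mu\ge h^0(E)-h^1(E)_\zeta$, as claimed.

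The main obstacle I anticipate is exactly the last step: ensuring that at each stage the obstruction class $[v\cdot f_n]$ vanishes in $H^1(X,E)$, not just in $H^1(X,E)_\zeta$. The clean way to handle this is to choose the $f_n$'s functorially via a fixed linear section of $\dbar_E$ on the relevant subspace (a Hodge-theoretic homotopy operator $G\dbar_E^\ast$), so that the property ``$v\cdot f_n$ is exact'' propagates automatically along the recursion from the hypothesis $f_0\in\ker\mu$; I would spell this bookkeeping out carefully, since it is the only nontrivial ingredient, everything else being the rank–nullity count and the elementary identity $\dbar_E(v\cdot f)=v\wedge\dbar_E f$ together with $v\wedge v=0$.
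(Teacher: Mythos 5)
Your rank--nullity count for the first cup-product map $\mu\colon f_0\mapsto[v\cdot f_0]$ is correct and coincides with the paper's first step: $\mathrm{Im}(\mu)\subseteq H^1(X,E)_\zeta$, so $\dim\ker\mu\ge h^0(E)-h^1(E)_\zeta$. But the heart of your argument, the inclusion $\ker\mu\subseteq W_\zeta$, is not established and is false in general. Vanishing of the first-order obstruction $[v\cdot f_0]$ does not force the higher-order obstructions to vanish: the class $[v\cdot f_1]$ is well defined in $H^1(X,E)_\zeta/\mathrm{Im}(\mu)$ (the ambiguity in the choice of $f_1$ is exactly a coset of $H^0(X,E)$, which only moves $[v\cdot f_1]$ by elements of $\mathrm{Im}(\mu)$), and this Massey-type invariant can be nonzero. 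No choice of homotopy operator $G\dbar_E^\ast$ can make it vanish, because it is independent of the choice of $\dbar_E$-primitive modulo $\mathrm{Im}(\mu)$; the ``functorial section'' you propose in the last paragraph cannot repair this. Indeed, if your claim held, every first-order-unobstructed section would deform to all orders, contradicting the phenomenon that is the very subject of the reference \cite{GL2} (``higher obstructions''); note also that Section~4 of the paper needs the $\partial\dbar$-lemma, i.e.\ special Hodge-theoretic structure of $E=\Omega^k_X$, precisely to kill these higher obstructions --- for a general $E$ they survive.

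The paper's proof works around exactly this difficulty. It does not claim $\ker\mu\subseteq W_\zeta$; instead it builds a decreasing tower $K^0_1(v)\supseteq K^0_2(v)\supseteq\cdots$, where $K^0_{i+1}(v)$ is the kernel of the $(i+1)$-st obstruction map $v^{(i+1)}\colon K^0_i(v)\to K^1_i(v)$ whose \emph{target is the cokernel of the previous map}, not $H^1(X,E)_\zeta$ itself. The condition to extend a solution one more order is that the obstruction vanish in that cokernel, i.e.\ equal $[v\cdot s]$ modulo the accumulated image, and one then corrects $f_n$ by $s$. Because kernel and cokernel shrink together, the index $\dim K^0_i(v)-\dim K^1_i(v)=h^0(E)-h^1(E)_\zeta$ is preserved at every stage; the tower stabilizes at some $K^0_m(v)\subseteq W_\zeta$ with $\dim K^0_m(v)\ge h^0(E)-h^1(E)_\zeta$. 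This index-preservation argument is the missing idea in your proposal, and without it (or the erroneous inclusion $\ker\mu\subseteq W_\zeta$) the dimension bound does not follow.
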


\begin{proof} 

For  the operator $\dbar_E+tv$  the equations (\ref{formale}) become
\begin{align}\dbar_E f_n+vf_{n-1}=0.\end{align} Lemma \ref{ex} gives just 
$\dbar_E( v\cdot f_i)=v\wedge v \cdot f_i=0.$

\noindent{\it{First step.}} Consider the linear map $$v^{(1)}: H^0(X,E)\to H^1(X,E)_\zeta, $$
$$f_0\mapsto[vf_0].$$  

\noindent This is just the cup product map.

Let $ K^0_1(v):=\ker(v^{(1)})$, $K^1_1(v):=\text{Coker}(v^{(1)})=H^1(X,E)_\zeta/\text{Im}(v^{(1)})$.  
Since $\dim \ker(v^{(1)})=h^0(X,E)-\dim \text{Im}(v^{(1)})$, then  
$$\dim K^0_1(v)-\dim K^1_1(v)=h^0(X,E)-h^1(X,E)_\zeta.$$
\noindent{\it{Second step.}} We have that for $f_0\in K^0_1(v)$, there is $f_1$ such that $\dbar_E(f_1)+vf_0=0.$ We define 
$$v^{(2)}:K^0_1(v)\to K^1_1(v),\ f_0\mapsto [vf_1].$$ 
The map $v^{(2)}$ is well defined: 
\begin{enumerate}
\item $\dbar_E (vf_1)=0$; $v\wedge vf_1=0$; $[vf_1]\in H^1(X,E)_\zeta.$
\item if $\dbar_E (g)=\dbar_E(f_1)=-vf_0$ then $\dbar_E(f_1-g)=0$  $f_1-g=s\in H^0(X,E)$;  we have 
$vf_1-vg=vs=0$ in $K^1_1(v)$ since $vs\in \text{Im} (v^{(1)})$.
\end{enumerate}

\vskip2mm

\noindent  Now if $f_0\in \ker v^{(2)}$, there is  $f_2\in \cC^\infty(E)$ such that $vf_1=-\dbar_Ef_2 - vs$, where $s\in H^0(X,E)$,
then we have $\dbar_Ef_2+v(f_1+s)=0$. Let $f'_1=f_1+s$, then  
$$(\dbar_E+tv)\cdot (f_0+t f'_1+t^2f_2)=0 \mod \ t^3.$$
We reset the notation by $f_1=f'_1$. Let $K^0_2(v)=\ker v^{(2)}$ and $ K^1_2(v)=\text{Coker} (v^{(2)})$, we have
$$\dim K^0_2(v)- \dim K^1_2(v)=\dim K^0_1(v)- \dim K^1_1(v)=h^0(E)-h^1(E)_\zeta.$$

\vskip2mm

\noindent{\it{Inductive Step.}} For $i\geq 2$, we define inductively the following linear maps: 

\vskip2mm

\noindent Let $K^0_i(v):=\ker(v^{(i)}), K^1_i(v):=\text{Coker} (v^{(i)})$ and  define
 recursively
$$v^{(i+1)}:K^0_i(v)\to K^1_i(v),\hskip5mm  f_0\mapsto [vf_i].$$ 

\noindent We have that $$\dim K^0_{i+1}(v)-\dim K^1_{i+1}(v)=\dim K^0_i(v)-\dim K^1_i(v)=h^0(E)-h^1(E)_\zeta.$$ 

As in step $2$, the condition to solve (\ref{formale2}) at the level $n$ is that $f_0\in \ker v^{(n)}$.
In fact if $v^n(f_0)=0$ we can find $s\in K^0_{n-1}(v)\subset H^0(X,E)$ such that $vf_{n}=-\dbar_Ef_{n+1}-vs$, then letting $f'_n =f_n+s$ we get 
$\dbar_Ef_{n+1}+vf'_n=0$ when $\dbar_Ef_n=\dbar_Ef'_n=vf_{n-1}$. Note that we only modify the $n-$term of the sequence $f_n$. 

\noindent Since $K^0_{i+1}(v)\subset K^0_i(v)$ this process stop in a finite number of steps, so we can find an integer $m$ such that the linear map $v^{(m+1)}:K^0_m(v)\to K^1_m(v)$ is the zero map. Then we can find a vector space $K^0_m(v) \subset H^0(X,E)$ such that for any $f_0\in K^0_m(v)$, $(\zeta,f_0)$ allows a linear formal solution. It follows that $K^0_m(v)\subset W_\zeta$. Note that 
$$\dim W_\zeta \geq \dim K^0_m(v)\geq \dim K^0_m(v)-\dim K^1_m(v)=h^0(E)-h^1(E)_\zeta.$$
\end{proof}

\noindent Now the proof of the Proposition \ref{crank} is as follows : 

\begin{proof} By semicontinuity of $h^1(E)_\zeta$ as $\zeta$ varies in $H^1(X,\cO_X)$, it is enough to prove the result for general $\zeta.$ For a general element $\zeta \in H^1(X,\cO_X)$, the analytic curve $\gamma(t)=\exp (t\zeta)$ is Zariski dense in $P$. Letting $k=h^0(E)-h^1(E)_\zeta-1$ we get that for all $t$, $\gamma(t)\in \cW(E)^k.$ In fact, first we note that  $h^0(E)\geq k+1$, that is, $\gamma(0)=\cO_X\in \cW(E)^k$, so taking an affine open set $U$ of $P$ and polynomial equations $F_j$ for the scheme $\cW(E)^k\cap U$, we get that
$F_j(\gamma(t))=0\mod t^n$ because we can  deform a space of dimension $\geq k+1$  $\mod t^n$. Since $F_j(\gamma(t))$ is holomorphic and $F_i(\gamma(t))=0 \mod t^n$ for all $n$ we get $F_i(\gamma(t))\equiv 0$, that is, $\gamma(t)\in \cW(E)^k.$  Since the image of $\gamma$ is Zariski dense we get that $\cW(E)^k=P.$ This implies $k\leq r_C(E)$.\qed\end{proof}

\begin{remark} In the previous proof, the Artin approximation theorem has not been fully used since  $\gamma(t )=\exp (t\zeta)$ is already analytic, using it one can show directly that the sections of the vector bundle deform along  $\gamma(t).$  This gives  again $\gamma(t)\in \cW(E)^k.$ \end{remark}


\section{Generic vanishing toward the $\partial\dbar$ lemma}

In this section we give a proof of a piece of the generic vanishing of  Green and Lazarsfeld (See \cite{GL2}).
\begin{lemma} \label{dbar}
Let $E=\Omega^k_X$ be and $\omega=f_0\in H^0(X,E)=H^{k,0}(X)$ a section. 
Let  $\zeta\in H^1(X,\cO_X)$ be and assume that $\zeta\cdot \omega=0
\in H^1(X,E)=H^{k,1}(X)$, then $(\zeta,\omega)$ admits a formal linear solution (See definition \ref{nobs}).\end{lemma}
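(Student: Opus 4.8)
The plan is to produce, step by step, a sequence $f_1, f_2, \dots \in \cC^\infty(\Omega^k_X)$ with $v_i = 0$ for $i \geq 2$ (so $v = v_1$ is a fixed harmonic representative of $\zeta$) satisfying the linear formal equations $\dbar f_n + v \wedge f_{n-1} = 0$. The main tool is the $\partial\dbar$-lemma on the compact Kähler manifold $X$: if a form is both $\partial$-closed and $\dbar$-closed and is $\dbar$-exact (or $\partial$-exact), then it is $\partial\dbar$-exact. The key structural observation is that $\Omega^k_X$-valued $(0,q)$-forms are just $(k,q)$-forms on $X$, the operator $\dbar_E$ is the usual $\dbar$, and wedging with the $(0,1)$-form $v = \overline{\omega}_1$ (harmonic, hence also $\partial$-closed since harmonic forms on a Kähler manifold are $d$-closed) behaves well: if $\alpha$ is a $(k,q)$-form, then $\partial(v \wedge \alpha) = -v \wedge \partial \alpha$ and $\dbar(v \wedge \alpha) = -v \wedge \dbar \alpha$ because $v$ is both $\partial$- and $\dbar$-closed. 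This is what lets the induction run with $\partial$ as an auxiliary operator.

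The induction goes as follows. \emph{Base case}: $f_0 = \omega \in H^{k,0}(X)$ is holomorphic, so $\partial f_0 = 0$ (being a holomorphic $k$-form on a Kähler manifold) and $\dbar f_0 = 0$. Since $\zeta \cdot \omega = [v \wedge f_0] = 0$ in $H^{k,1}(X)$ and $v \wedge f_0$ is $\dbar$-closed, and moreover $\partial(v \wedge f_0) = -v \wedge \partial f_0 = 0$, the form $v \wedge f_0$ is $\partial$-closed, $\dbar$-closed, and $\dbar$-exact; by the $\partial\dbar$-lemma we may choose $f_1$ with $\dbar f_1 = -v \wedge f_0$ and, crucially, $f_1 = \partial(\text{something})$ up to a $\dbar$-closed term — more precisely we arrange $\partial f_1 = 0$ as well. \emph{Inductive step}: assume $f_0, \dots, f_n$ have been constructed with $\dbar f_j + v \wedge f_{j-1} = 0$ and $\partial f_j = 0$ for all $j \leq n$. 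Set $y = v \wedge f_n$. Then $\dbar y = -v \wedge \dbar f_n = -v \wedge(-v \wedge f_{n-1}) = 0$ (using $v \wedge v = 0$), and $\partial y = -v \wedge \partial f_n = 0$. So $y$ is $\partial$-closed and $\dbar$-closed; I must show $y$ is $\dbar$-exact. For this, I would use that $y = v \wedge f_n$ with $f_n$ $\partial$-closed: writing $f_n = $ (by the $\partial\dbar$-lemma applied at the previous stage) a form that is $\partial$-exact modulo harmonics — actually the cleanest route is that $v \wedge f_n$ is $\partial$-exact (since $v$ is $\partial$-closed and $f_n$ can be taken $\partial$-exact plus harmonic, and $v \wedge \text{harmonic}$ is $\dbar$-exact by the Hodge-theoretic argument of Green–Lazarsfeld), hence $\partial$-closed $\dbar$-closed and $\partial$-exact, so $\partial\dbar$-exact, so $\dbar$-exact. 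Then choose $f_{n+1}$ with $\dbar f_{n+1} = -y$ and $\partial f_{n+1} = 0$, again via $\partial\dbar$.

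The hard part will be maintaining the extra hypothesis $\partial f_j = 0$ (or the equivalent "$f_j$ is $\partial$-exact up to harmonic") along the induction, which is exactly what makes the $\partial\dbar$-lemma applicable at each stage to conclude that the obstruction $v \wedge f_n$ is not merely $\dbar$-closed but $\dbar$-exact. The point is that the naive first-order obstruction $[v \wedge f_{n}] \in H^{k,1}(X)$ need not vanish for the section chosen by brute force, but by always selecting $f_{n+1}$ in the image of $\partial$ (which the $\partial\dbar$-lemma permits, since $\dbar f_{n+1} = -y$ with $y$ $\partial\dbar$-exact forces $f_{n+1}$ to be $\partial$-exact modulo $\ker \dbar$, and harmonic corrections don't hurt), the next obstruction class automatically dies. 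This is precisely the Green–Lazarsfeld / Clemens–Hacon mechanism; I would cite \cite{V}, Proposition 6.17 for the $\partial\dbar$-lemma and note that the argument shows not just vanishing of the first obstruction but of all of them, so $(\zeta, \omega)$ admits a formal linear solution in the sense of Definition \ref{nobs}, which is what the lemma asserts.
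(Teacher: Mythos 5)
Your overall strategy is the same as the paper's: take a harmonic representative $v$ of $\zeta$, identify $\Omega^k_X$-valued $(0,q)$-forms with $(k,q)$-forms, and run an induction in which the $\partial\dbar$-lemma kills each successive obstruction $v\wedge f_n$. But the inductive step as you state it does not close. Your induction hypothesis is $\partial f_j=0$ ($\partial$-closedness), and from this you can only conclude that $y=v\wedge f_n$ is $\partial$-closed and $\dbar$-closed; such a form need not be $\dbar$-exact (it can carry a nonzero class in $H^{k,1}(X)$), and the $\partial\dbar$-lemma needs some exactness as input. You try to repair this by writing $f_n$ as ``$\partial$-exact plus harmonic'' and asserting that $v\wedge(\text{harmonic})$ is $\dbar$-exact. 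That assertion is false: a harmonic $(k,0)$-form $h$ is just a holomorphic $k$-form, and $v\wedge h$ represents the cup product $\zeta\cdot h\in H^{k,1}(X)$, which in general does not vanish (on an abelian surface with $k=1$, $h=dz_1$, $v=d\bar z_1$, the form $v\wedge h$ is harmonic and nonzero, hence not $\dbar$-exact). So the harmonic part cannot be discarded, and this step of your argument would fail.

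The correct bookkeeping --- and what the paper does --- is to carry the \emph{stronger} hypothesis that each $f_i$ for $i\geq 1$ is genuinely $\partial$-exact, $f_i=\partial g_i$, with no harmonic correction at all. This costs nothing: at each stage the $\partial\dbar$-lemma produces $g_{n+1}$ with $v\wedge f_n=\dbar\partial g_{n+1}$, and one simply \emph{defines} $f_{n+1}$ to be $\partial g_{n+1}$ (up to sign), rather than an arbitrary solution of $\dbar f_{n+1}=-v\wedge f_n$ adjusted afterwards. Then $v\wedge f_{n+1}=\pm\partial(v\wedge g_{n+1})$ is $\partial$-exact (since $\partial v=0$) and $\dbar$-closed (since $v\wedge v=0$), so it lies in $\ker\dbar\cap \text{Im}\,\partial=\text{Im}(\partial\dbar)$ and the induction continues. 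With this one change --- replace ``$\partial f_j=0$'' by ``$f_j\in\text{Im}\,\partial$'' and drop the harmonic term --- your argument becomes exactly the paper's proof.
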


\begin{proof}  Since $H^1(X,\mathcal O_X)\simeq H^{0,1}(X)\simeq\mathcal H^{0,1}(X)$ we can take a representative of
 $[v]=\zeta$  to be harmonic, so $\partial v=0$ and $\dbar v=0$. Assume that $[v\wedge \omega]=0$ in $H^1(X,\Omega^k_X)\simeq H_{\dbar}^{k,1}(X)$, then there exists $\alpha\in\text{Im}(\dbar:\cA^{k,0}(X)\to\cA^{k,1}(X))$ such that 
 $v\wedge f_0=\dbar \alpha$. We have also that $\partial (v\wedge f_0)=\partial(v)\wedge f_0+v\wedge\partial \alpha=0+0=0$, then $v\cdot f_0\in\text{ker}(\partial:\cA^{k,1}(X)\to\cA^{k+1,1}(X))$. Applying the 
 $\partial\dbar$-lemma (See e.g. \cite{V}, Proposition 6.17) we can find $g_1\in \cC^\infty(\Omega_X^{k-1})$ such that
$$v\cdot f_0=-\dbar \partial(g_1).$$ 
Let  $f_1=\partial g_1$ be, we have that $\dbar f_1+v\wedge f_0=0$. Suppose that $f_i=\partial g_i$ such that for $i=1,...,n$, we have $0=\dbar f_i+v\wedge f_{i-1}= \dbar \partial g_i+\partial v\wedge g_{i-1}$. We perform the  $(n+1)$th step. Consider $\theta_n=\partial v\wedge g_n$. We have $\theta_n=\partial ( v\wedge g_n)$ and $\dbar(\theta_n)=-v\dbar\partial g_{n}=-v\wedge v\wedge \partial g_{n-1}=0$, then by $\partial\dbar$-lemma we have $\theta_n\in\ker(\dbar)\cap \text{Im}(\partial)=\text{Im}(\partial\dbar)$, thus there is $g_{n+1}$ such that $\theta_n=\dbar \partial(g_{n+1})$. Taking $f_{n+1}=\partial g_{n+1}$ we have 
$$(\dbar+tv)(f_0+  \sum_{i=1}^\infty  t^i \partial g_i)=0.$$
\qed\end{proof}

Arguing as before we get
\begin{proposition} (Green-Lazarsfeld) Let $\omega\in H^0(X,\Omega_X^k)$ and $\zeta\in H^1(X,\cO_X)$\label{gl} be and assume that $\zeta\cdot \omega=0\in H^1(X,\Omega_X^k)$. Define $\gamma: \bC\to P$, 
$\gamma(t) =\exp t\zeta$, then there is a family of sections $\omega(t)\in H^0(X,\Omega_X^k(\gamma(t)))$ along $\gamma(t)$ such that $\omega(0)=\omega$. \end{proposition}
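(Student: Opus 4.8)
The plan is to combine the formal linear solution produced by Lemma \ref{dbar} with the convergence argument of Proposition \ref{convergence}, exactly as was done to prove Proposition \ref{trans}. First I would invoke Lemma \ref{dbar}: since $\omega = f_0 \in H^0(X,\Omega^k_X)$ and $\zeta = [v] \in H^1(X,\cO_X)$ satisfy $\zeta\cdot\omega = 0$ in $H^1(X,\Omega^k_X)$, the couple $(\zeta,\omega)$ admits a formal \emph{linear} solution in the sense of Definition \ref{nobs}; that is, there is a sequence $f_i \in \cC^\infty(\Omega^k_X)$ (explicitly $f_i = \partial g_i$) with $f_0 = \omega$ solving the formal equation $(\dbar + tv)(f_0 + \sum_{i\geq 1} t^i f_i) = 0$. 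In particular $(\zeta,\omega)$ allows a formal solution in the sense required by Proposition \ref{convergence}.

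Next I would apply Proposition \ref{convergence} to this formal solution. It yields an analytic curve $\gamma\colon\Delta\to P$ with $\gamma(0)=\cO_X$ and $\gamma'(0)=\zeta$, together with a family of sections $\omega(t)\in H^0(X,\Omega^k_X(\gamma(t)))$ with $\omega(0)=\omega$. Since the formal solution is linear, the formal curve is literally $\widetilde\gamma(t) = \exp(t\zeta)$, so one expects the analytic curve to be (a reparametrization of, or simply) $\gamma(t) = \exp(t\zeta)$ itself; I would point out that because the only obstruction equations involved are the polynomial equations $F_j$ cutting out $\cW(\Omega^k_X)$ and because $\exp(t\zeta)$ is already globally analytic on all of $\bC$, the curve extends over $\bC$, not merely over a disk. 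This matches the stated domain $\gamma\colon\bC\to P$.

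Finally I would record the conclusion: for every $t$ the section $\omega$ deforms to $\omega(t)\in H^0(X,\Omega^k_X(\gamma(t)))$, which is precisely the assertion. The main point to be careful about — the ``hard part'', though it is really bookkeeping rather than a genuine obstacle — is checking that the linearity of the formal solution forces the analytic curve to be the honest one-parameter subgroup $t\mapsto\exp(t\zeta)$ and is defined over all of $\bC$, rather than only on a small disk as in the general statement of Proposition \ref{convergence}; this is the same observation made in Remark following Proposition \ref{crank}, namely that when $\gamma(t)=\exp(t\zeta)$ is already analytic the Artin approximation theorem is not needed and one deduces directly that the sections deform along $\gamma(t)$.
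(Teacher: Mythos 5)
Your proposal is correct and follows exactly the route the paper intends: the paper's proof of Proposition \ref{gl} is literally ``arguing as before,'' i.e.\ feed the formal linear solution from Lemma \ref{dbar} into the convergence machinery of Proposition \ref{convergence}, which is what you do. Your additional observation that linearity of the formal solution makes the curve $t\mapsto\exp(t\zeta)$ globally analytic on $\bC$ (so Artin is only needed for the sections, not the curve) is the same point the paper makes in the remark following Proposition \ref{crank}.
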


\begin{remark} The proof is the same of the one given for the Proposition \ref{trans}. Following Green and Lazarsfed we note that the Zariski 
closure of $\gamma(t)$ is an abelian subvariety of $P.$ If $\zeta$ is linearly very generic, that is, is not contained in a countable union of proper vector subspaces of $H^1(X,\cO_X),$ then $\overline{\gamma(t)}=P.$  Assume $k=n=\dim X$, then $\Omega_X^n=K_X$ and
$\omega\neq 0$. Let $D(\omega)$ be the divisor of $\omega$. If $\zeta \cdot \omega=0$ and $\zeta$ is linearly very generic we have that $D(\omega)\in\text{Div}^{\kappa}(X)_{\text main}$ (See (\ref{main})), the main component of the paracanonical system.\end{remark}

\section{The base locus of the paracanonical system} 

In this section we will prove  Proposition \ref{zeri}. We will use again an antilinear isomorphism $H^1(X,\mathcal O_X)\simeq\overline{H^0(X,\Omega^1_X)}$. Thus, given $v\in H^1(X,\mathcal O_X)$, there exists $\beta\in H^0(X,\Omega_X^1)$ such that $[\overline{\beta}]=v$. 
\noindent
Consider the  hermitian pairing $<,>H^0(X,K_X)\times H^0(X,K_X)\to\Bbb C$ given by
\begin{equation} <\omega_1,\omega_2>=(-i)^n\int_X\omega_1\wedge\overline{\omega}_2.\label{her}\end{equation}

For $\beta\in H^0(X,\Omega^1_X)$ we have the cup product maps

$$ H^0(X,\Omega_X^{n-1})\xrightarrow{\wedge\beta} H^0(X,\Omega_X^n)=H^0(X,K_X).$$ 
$$H^0(X,K_X)\xrightarrow{\cdot\overline{\beta}} H^1(X,K_X)$$

\begin{lemma} For every $\beta\in H^0(X,\Omega^1_X)$ we have $\text{Im}(\wedge\beta)^\perp=\ker(\overline{\beta}).$ \label{ort} \end{lemma}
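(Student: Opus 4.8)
The statement to prove is Lemma~\ref{ort}: for every $\beta\in H^0(X,\Omega^1_X)$, one has $\text{Im}(\wedge\beta)^\perp=\ker(\overline{\beta})$, where the orthogonal complement is taken with respect to the Hermitian pairing \eqref{her} on $H^0(X,K_X)$, the map $\wedge\beta\colon H^0(X,\Omega_X^{n-1})\to H^0(X,K_X)$ is wedging with $\beta$, and $\cdot\overline{\beta}\colon H^0(X,K_X)\to H^1(X,K_X)$ is cup product with the Dolbeault class of $\overline{\beta}$.

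\textbf{Plan of proof.} The plan is to show that a holomorphic $n$-form $\omega$ lies in $\text{Im}(\wedge\beta)^\perp$ if and only if $\omega\wedge\overline{\beta}$ is $\dbar$-exact, which is exactly the condition $\omega\in\ker(\overline{\beta})$. The key tool is Serre duality together with the $\partial\dbar$-lemma. First I would make the identification precise: since $K_X=\Omega^n_X$, Serre duality gives a perfect pairing $H^1(X,K_X)\times H^0(X,\cO_X)\to \bC$, but more usefully, $H^1(X,K_X)\cong H^{n,1}_{\dbar}(X)$ and, since every holomorphic $(n-1)$-form is automatically $\dbar$-closed (top antiholomorphic degree arguments or simply that $\Omega^{n-1}_X$-valued computations reduce to harmonic representatives), cup product with $\overline{\beta}$ can be computed at the level of forms: $\overline{\beta}\cdot\omega = [\omega\wedge\overline{\beta}]\in H^{n,1}_{\dbar}(X)$.

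\textbf{Key steps.} (1) Show $\text{Im}(\wedge\beta)^\perp\subseteq\ker(\overline{\beta})$: take $\omega\in H^0(X,K_X)$ with $\langle\omega,\beta\wedge\psi\rangle=0$ for all $\psi\in H^0(X,\Omega^{n-1}_X)$. Unwinding \eqref{her}, this says $\int_X\omega\wedge\overline{\beta}\wedge\overline{\psi}=0$ for all such $\psi$. Now $\omega\wedge\overline{\beta}$ is a $\dbar$-closed $(n,1)$-form, and $\overline{\psi}$ ranges over all antiholomorphic, i.e.\ $\dbar$-closed $(0,n-1)$-forms whose classes span $H^{0,n-1}_{\dbar}(X)\cong \overline{H^0(X,\Omega^{n-1}_X)}$; by Serre/Dolbeault duality between $H^{n,1}_{\dbar}(X)$ and $H^{0,n-1}_{\dbar}(X)$, the vanishing of all these pairings forces $[\omega\wedge\overline{\beta}]=0$ in $H^{n,1}_{\dbar}(X)$, i.e.\ $\omega\in\ker(\overline{\beta})$. (2) The reverse inclusion $\ker(\overline{\beta})\subseteq\text{Im}(\wedge\beta)^\perp$ is the same computation run backwards: if $\omega\wedge\overline{\beta}=\dbar\alpha$ then for any holomorphic $\psi$, $\int_X\omega\wedge\overline{\beta}\wedge\overline{\psi}=\int_X\dbar\alpha\wedge\overline{\psi}=\pm\int_X\dbar(\alpha\wedge\overline{\psi})=0$ by Stokes, since $\dbar\overline{\psi}=0$; hence $\langle\omega,\beta\wedge\psi\rangle=0$ for all $\psi$. (3) Finally I would note that both sides have the expected dimensions so the two inclusions indeed give equality (this is automatic once both inclusions are established, so no dimension count is strictly needed).

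\textbf{Main obstacle.} The only delicate point is making the duality in step~(1) rigorous: I must be sure that the pairing $H^{n,1}_{\dbar}(X)\times H^{0,n-1}_{\dbar}(X)\to\bC$, $([\mu],[\nu])\mapsto\int_X\mu\wedge\nu$, is perfect (this is Dolbeault's theorem plus Serre duality $H^{n,1}_{\dbar}(X)\cong H^{n-1,0}_{\dbar}(X)^{\vee}$ via $\mu\wedge(\cdot)$), and that the classes $[\overline{\psi}]$ for $\psi\in H^0(X,\Omega^{n-1}_X)$ exhaust $H^{0,n-1}_{\dbar}(X)$, which follows from Hodge theory on the compact Kähler manifold $X$ (conjugation identifies $\mathcal{H}^{0,n-1}$ with $\overline{\mathcal{H}^{n-1,0}}=\overline{H^0(X,\Omega^{n-1}_X)}$). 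Once these standard facts are in place, the lemma is a one-line consequence of Stokes' theorem and the nondegeneracy of the duality pairing; the $\partial\dbar$-lemma is not even strictly needed here, only the Dolbeault--Serre duality, although it is the natural companion to the way the lemma will be used in the sequel.
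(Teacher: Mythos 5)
Your proof is correct and follows essentially the same route as the paper: unwind the Hermitian pairing \eqref{her} to rewrite $\langle\theta,\beta\wedge\alpha\rangle$ as $\pm\int_X(\theta\wedge\overline{\beta})\wedge\overline{\alpha}$, and then invoke Serre--Dolbeault duality between $H^{n,1}_{\dbar}(X)$ and $H^{0,n-1}_{\dbar}(X)$ (with the harmonic classes $[\overline{\alpha}]$ exhausting the latter) to conclude that vanishing for all $\alpha$ is equivalent to $[\theta\wedge\overline{\beta}]=0$, i.e.\ $\theta\in\ker(\overline{\beta})$. The paper runs this as a single chain of equivalences labelled ``duality,'' whereas you spell out the two inclusions and the Stokes argument separately; the content is identical.
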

\begin{proof} We have

\noindent $\theta\in\text{Im}(\wedge\beta)^\perp\iff 0=\int_X \theta\wedge \overline{\alpha\wedge\beta}=\int_X(\theta\wedge\overline{\beta})\wedge\overline{\alpha},\ \forall \alpha\in H^0(X,\Omega_X^{n-1})\stackrel{{\text duality}}\iff\theta\wedge\overline \beta=0\in H^{1}(X,\Omega_X^{n-1})\iff\theta\in \ker\overline\beta$.\qed\end{proof}

\noindent Let $\bP(H^0(X,K_X))=|K_X|$ and  $\bP=\bP(H^1(X,\cO_X))$  the projective spaces of $H^0(X,K_X)$ and $H^1(X,\cO_X)$ respectively. Inside $|K|\times \bP$ consider the incidence locus $U\subset |K_X|\times \bP$: 
$$U=\{((s),(\zeta)): \zeta\cdot s=0\}.$$

We have that $U$ is the locus parametrizing pairs $(s,\zeta)$ such that $s\in\ker\zeta$. We have seen in Proposition \ref{gl} that there are not higher obstructions to deform $s$ in the $\zeta$-direction. Let $\pi_1: U\to |K_X|$ and  $\pi_2: U\to \bP$ be the projections.  We remark that the fibers of $\pi_2$ are the projective spaces corresponding to $\ker(\overline\beta)$. As a consequence of the Generic Vanishing Theorems of Green-Lazarsfeld (See \cite{GL1}, \cite{GL2} and \cite {MPP1}, Lemma 4.2) we have:
\begin{proposition}
Assume that $X$ is a variety of Albanese general type with $\chi(K_X)>0$. We have that $\pi_2(U)=\bP(H^1(X,\cO_X))$. Let $U_{main}$ be the unique irreducible component of $U$ that dominates $\bP(H^1(X,\cO_X))$, then  
$\dim(U_{main})=\chi(K_X)+q-2.$ 
  
\end{proposition}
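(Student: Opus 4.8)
The plan is to analyze the incidence variety $U\subset |K_X|\times\bP$ via the second projection $\pi_2$ and to use the generic vanishing theorems to compute both the image and the dimension of the distinguished component. First I would establish that $\pi_2$ is surjective. Fix $\zeta\in H^1(X,\cO_X)$ and write $\zeta=[\overline{\beta}]$ with $\beta\in H^0(X,\Omega^1_X)$; the fiber $\pi_2^{-1}((\zeta))$ is $\bP(\ker(\overline{\beta}))$, and by Lemma \ref{ort} this equals $\bP(\mathrm{Im}(\wedge\beta)^\perp)$. So I must show $\ker(\overline{\beta})\neq 0$, i.e. that the cup product map $\cdot\overline{\beta}:H^0(X,K_X)\to H^1(X,K_X)$ is never injective. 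This is exactly a piece of the Green--Lazarsfeld theory: for $X$ of Albanese general type one has $\dim\ker(\overline{\beta})\geq \chi(K_X)>0$ for all $\beta$, so $\pi_2$ is onto and a general fiber has dimension $\chi(K_X)-1$ (using that the generic value of $\dim\ker(\overline\beta)$ is $\chi(K_X)$).

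Next I would deal with irreducibility of a dominating component. Since $\pi_2:U\to\bP$ is surjective with $\bP=\bP(H^1(X,\cO_X))$ irreducible of dimension $q-1$, there is at least one component of $U$ dominating $\bP$; call it $U_{main}$. Its existence matches the earlier discussion of the main component $\mathrm{Div}^{\kappa}(X)_{\mathrm{main}}$ (indeed the map $(s)\mapsto D(s)$ relates $U$ to the paracanonical system, and the hypothesis $r_c(K_X)>0$, which follows from $\chi(K_X)>0$ and maximal Albanese dimension via generic vanishing, guarantees a unique such component). For uniqueness one argues that the general fiber of $\pi_2$ over $\bP$ is irreducible — each fiber is a projective space $\bP(\ker(\overline\beta))$, hence irreducible — and a dominant map to an irreducible base with irreducible general fiber has a unique dominating component of dimension $\dim\bP+(\text{general fiber dimension})$.

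Putting the numbers together: $\dim U_{main}=(q-1)+(\chi(K_X)-1)=\chi(K_X)+q-2$, which is the assertion. The main obstacle I anticipate is the input from Green--Lazarsfeld: I need the precise statement that for $X$ of Albanese general type the generic corank of $\cdot\overline{\beta}$ on $H^0(X,K_X)$ equals $\chi(K_X)$ (equivalently, the generic vanishing theorem computing $h^i$ of twists of $K_X$ by generic $L\in P$), together with the semicontinuity ensuring the corank does not drop below $\chi(K_X)$ anywhere. Both are cited in the text (\cite{GL1}, \cite{GL2}, \cite{MPP1} Lemma 4.2), so the remaining work is the elementary fibration bookkeeping and invoking Proposition \ref{gl} to know that points of $U$ genuinely deform, so that the component $U_{main}$ corresponds to $\mathrm{Div}^{\kappa}(X)_{\mathrm{main}}$.
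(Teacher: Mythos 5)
Your proposal is correct and is exactly the argument the paper intends: the paper states this proposition without proof, presenting it as a direct consequence of the Green--Lazarsfeld generic vanishing theorems and \cite{MPP1}, Lemma 4.2, and your fibration bookkeeping --- fibers of $\pi_2$ are the projective spaces $\bP(\ker(\overline{\beta}))$, of dimension $\geq \chi(K_X)-1$ everywhere by semicontinuity and $=\chi(K_X)-1$ generically, so the unique dominating component has dimension $(q-1)+(\chi(K_X)-1)$ --- is precisely the implicit reasoning. The only inputs you defer (the generic corank of $\cdot\overline{\beta}$ on $H^0(X,K_X)$ equals $\chi(K_X)$ for $X$ of maximal Albanese dimension) are exactly the cited external results, so there is no gap.
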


\begin{definition}
We call the variety $Y=\pi_1(U_{main})\subset |K_X|$
the locus of deformable canonical divisors. It consists in fact of divisors that deform to the main component of the paracanonical system $Y= \text{Div}^{\kappa}(X)_{\text main}\cap |K_X|$. (See \ref{main}).
\end{definition}

\begin{proposition}
 Assume that $X$ is of Albanese general type with $\chi(K_X)>0$. Let $Y$ be the locus of deformable canonical divisors of $X$, then $Y$ is non degenerate, that is, is not contained in any proper linear subspace of $|K_X|$.
\end{proposition}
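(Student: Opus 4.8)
The plan is to argue by contradiction: suppose $Y=\pi_1(U_{\mathrm{main}})$ is contained in a hyperplane $\bP(V)\subsetneq|K_X|$, where $V\subsetneq H^0(X,K_X)$ is the corresponding codimension-one subspace. Every deformable canonical divisor $D(\omega)$ with $[\omega]\in U_{\mathrm{main}}$ then has $\omega\in V$. By the description of $U_{\mathrm{main}}$ and the Remark following Proposition \ref{gl}, for a linearly very generic $\zeta=[\overline\beta]\in H^1(X,\cO_X)$ and any $\omega$ with $\zeta\cdot\omega=0$ the divisor $D(\omega)$ lies in $\mathrm{Div}^\kappa(X)_{\mathrm{main}}$, hence $\omega\in V$; that is, $\ker(\cdot\overline\beta)\subset V$ for very general $\beta$. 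Using Lemma \ref{ort}, $\ker(\cdot\overline\beta)=\mathrm{Im}(\wedge\beta)^\perp$, so the containment $\mathrm{Im}(\wedge\beta)^\perp\subset V$ is equivalent, by taking orthogonal complements with respect to the Hermitian pairing \eqref{her}, to
$$V^\perp\subset \mathrm{Im}\bigl(\wedge\beta: H^0(X,\Omega_X^{n-1})\to H^0(X,K_X)\bigr).$$
Pick $0\ne\psi\in V^\perp$ (a single vector, since $V$ has codimension one). Then for very general $\beta\in H^0(X,\Omega_X^1)$ there exists $\alpha_\beta\in H^0(X,\Omega_X^{n-1})$ with $\psi=\alpha_\beta\wedge\beta$.

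The core of the argument is to show this forces a fibration structure on $X$ incompatible with being of Albanese general type, or else $\chi(K_X)\le 0$, both contradictions. First I would note that "very general" can be upgraded: the set of $\beta$ for which $\psi\in\mathrm{Im}(\wedge\beta)$ is the image of a constructible map, and if it contains a very general point it contains a Zariski-dense open subset of $H^0(X,\Omega_X^1)$; since $\psi\in\mathrm{Im}(\wedge\beta)$ is a Zariski-closed condition on $\beta$ (the rank of the pairing drops), it must in fact hold for \emph{all} $\beta\in H^0(X,\Omega_X^1)$. So: the fixed holomorphic $n$-form $\psi$ is divisible by every global $1$-form. Now invoke the structure theory behind generic vanishing (as in \cite{GL1}, \cite{MPP1}): a nonzero $n$-form divisible by every $1$-form pulls back from a proper quotient torus, i.e. there is a fibration $f:X\to Z$ with $\dim Z<n$ such that $\psi\in f^*H^0(Z,K_Z\otimes(\text{something}))$ and all of $H^0(X,\Omega_X^1)=a^*H^0(A,\Omega_A^1)$ factors through $f$ on the one-form level only if $q(Z)=q(X)$. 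I would make this precise via the Albanese: write $H^0(X,\Omega^1_X)=H^0(A,\Omega_A^1)$; the divisibility of $\psi$ by all of $\Omega_A^1$ at the generic point of $X$ says the differential of $a$ at a general point, contracted into $\psi$, still leaves something — more carefully, it says $\psi$ vanishes on the kernel of every $da$-evaluation, which for maximal Albanese dimension is empty, giving an immediate contradiction unless $\psi=0$.

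Concretely the cleanest route: since $X$ has maximal Albanese dimension, at a general point $x\in X$ the map $da_x:T_xX\to T_0A$ is injective, so the pullbacks of a general $n$-tuple of $1$-forms $\beta_1,\dots,\beta_n\in H^0(A,\Omega^1_A)$ are linearly independent at $x$, hence $\beta_1\wedge\cdots\wedge\beta_n$ is a nonzero $n$-form not vanishing at $x$. If $\psi=\alpha_\beta\wedge\beta_1$ for all $\beta_1$, choosing $\beta_1$ whose divisor avoids a fixed general $x\notin D(\psi)$ we learn $\alpha_{\beta_1}(x)\ne0$; but running over a basis $\beta_1,\dots,\beta_q$ of $H^0(X,\Omega^1_X)$ and comparing the decompositions $\psi=\alpha_i\wedge\beta_i$ at $x$ forces $\psi(x)$ to be decomposable as $(\text{vector})\wedge\beta_i(x)$ for \emph{every} $i$; an $n$-covector divisible by $q\ge n+1$ independent $1$-covectors in an $n$-dimensional space is zero. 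Hence $\psi=0$, contradiction, so no hyperplane contains $Y$, i.e. $Y$ is non-degenerate.

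The main obstacle I anticipate is the second paragraph's upgrade from "very general $\beta$" to "all $\beta$" and pinning down exactly which generic-vanishing input licenses the claim that $\ker(\cdot\overline\beta)\subset V$ for very general $\beta$ in the first place — this rests on $U_{\mathrm{main}}$ dominating $\bP=\bP(H^1(X,\cO_X))$ with the fibers of $\pi_2$ being precisely $\bP(\ker\overline\beta)$ (stated just above) together with the fact, from the preceding Proposition, that $\dim U_{\mathrm{main}}=\chi(K_X)+q-2>q-2$, so the fibers over a dense set of $\zeta$ have positive dimension and sit inside $Y$. Once the "for all $\beta$, $\psi$ divisible by $\beta$" statement is secured, the linear-algebra endgame using maximal Albanese dimension and $q>n$ is short; the generic-vanishing/Albanese geometry is where the real content lies and where I would cite \cite{GL1}, \cite{PP} and \cite{MPP1} rather than reprove things.
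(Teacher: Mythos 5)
Your overall route coincides with the paper's: argue by contradiction, use Lemma \ref{ort} to convert ``$\ker(\cdot\overline\beta)\subset V$ for generic $\beta$'' into ``$\psi\in\mathrm{Im}(\wedge\beta)$'' for a fixed $0\neq\psi\in V^{\perp}$, and then pass from very general $\beta$ to all $\beta$ by a closedness argument (the paper makes exactly the same two moves). The detour through fibration structure theory in your second paragraph is unnecessary. The problem is the endgame. The claim ``an $n$-covector divisible by $q\ge n+1$ independent $1$-covectors in an $n$-dimensional space is zero'' does not work: there are no $n+1$ linearly independent covectors in $T_x^*X$, and, more to the point, $\Lambda^nT_x^*X$ is one-dimensional, so a nonzero top-degree covector $\psi(x)$ is automatically divisible by \emph{every} nonzero covector $\beta(x)$ (extend $\beta(x)$ to a basis). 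Hence comparing the decompositions $\psi=\alpha_i\wedge\beta_i$ at a point where all $\beta_i(x)\neq 0$ yields no contradiction at all, and your ``cleanest route'' collapses. Maximal Albanese dimension is also not what drives the conclusion here.

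The correct finish, which is the one the paper uses and which you gesture at without executing, is this: pick $p\in X$ with $\psi(p)\neq 0$. Since $X$ is of Albanese general type one has $q>n$, so the evaluation map $H^0(X,\Omega^1_X)\to(\Omega^1_X)_p\cong\bC^n$ has nontrivial kernel; choose $0\neq\beta$ with $\beta(p)=0$. Since $\psi\in\mathrm{Im}(\wedge\beta)$ for \emph{all} $\beta$ (this is where the upgrade from very general to all is genuinely needed, because the $\beta$ you must use is special), writing $\psi=\alpha_\beta\wedge\beta$ gives $\psi(p)=\alpha_\beta(p)\wedge\beta(p)=0$, contradicting $\psi(p)\neq 0$. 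With this one-line replacement your proof becomes the paper's proof; as written, the step that is supposed to produce the contradiction is false.
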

\begin{proof} Define $W\subset H^0(X,K_X)$ as the subspace $$W={\text span}_{(s)\in Y}\{ s\}.$$
We have  to show that $W=H^0(X,K_X)$. Suppose by contradiction that $W\subsetneq H^0(X,K_X)$, then there exists $\theta$ orthogonal to $\ker(\overline{\beta})$ where $(\overline{\beta})$ is generic, $\overline{\beta}\in H^1(X,\cO_X)$.
\vskip1mm

\noindent By lemma (\ref{ort}), we have that $\theta\in\ker(\overline{\beta})^{\perp}=\text{Im}(\beta)$. This means that for generic $\beta$, $$\theta=\beta\wedge\theta_{\beta}$$ for some $\theta_{\beta}\in H^0(X,\Omega_X^{n-1})$. Since this is a closed condition, then  
$\theta\in \text{Im}(\beta)$ for all $\beta.$ We can find  a point $p\in X$, $\theta(p)\neq 0$, so we consider  the evaluation map 
$$H^0(X,\Omega^1_X)\otimes\mathcal O_X\xrightarrow{ev_p}  (\Omega^1_X)_p.$$ 

Since $q>n$, there exists $\beta\neq 0$ such that $\beta(p)=0$. Since $\theta=\beta\wedge\theta_1$ for some $\theta_1$, then $\theta(p)=\beta(p)\wedge\theta_1(p)=0$ which is a contradiction, then $W=H^0(X,K_X).$
\end{proof} 
\noindent Now we prove the Proposition \ref{zeri} :

\begin{proof} Consider the base  locus $Z_\kappa$ of  the main component of the paracanonical system:
$$Z_\kappa=\{p \in X:  p\in D, \ \forall D\in \text{Div}^{\kappa}(X)_{\text{main}}\}.$$  
It follows that $Z_\kappa\subset Z=\{p\in X: p\in D(s), \  \forall (s)\in Y\}.$ Since $Y$ is non-degenerate generates $H^0(X,K_X)$, then we have $Z=\{p\in X: s(p)=0,\ \forall s \in H^0(X,K_X)\}=Z_K$.\qed\end{proof}

\end{document}